\newcommand{\sumprime}{\if@display\sideset{}{'}\sum%
            \else\sum'\fi}
\begin{document}

\numberwithin{equation}{section}

\newtheorem{theorem}{Theorem}[section]
\newtheorem{proposition}[theorem]{Proposition}
\newtheorem{conjecture}[theorem]{Conjecture}
\def\theconjecture{\unskip}
\newtheorem{corollary}[theorem]{Corollary}
\newtheorem{lemma}[theorem]{Lemma}
\newtheorem{observation}[theorem]{Observation}
\newtheorem{definition}{Definition}
\numberwithin{definition}{section} 
\newtheorem{remark}{Remark}
\def\theremark{\unskip}
\newtheorem{kl}{Key Lemma}
\def\thekl{\unskip}
\newtheorem{question}{Question}
\def\theexample{\unskip}
\newtheorem{problem}{Problem}
\newtheorem{example}{Example}

\thanks{The first author thanks the Centre Henri Lebesgue ANR-11-LABX-0020-01 for creating an attractive mathematical environment. The second and third authors are supported by National Natural Science Foundation of China, No. 12271101.}

\address[Gilles Carron]{Nantes Université, CNRS, Laboratoire de Mathématiques Jean Leray, LMJL, UMR 6629, F-44000 Nantes, France}
\email{gilles.carron@univ-nantes.fr}

\address [Bo-Yong Chen] {School of Mathematical Sciences, Fudan University, Shanghai, 200433, China}
\email{boychen@fudan.edu.cn}

\address [Yuanpu Xiong] {School of Mathematical Sciences, Fudan University, Shanghai, 200433, China}
\email{ypxiong@fudan.edu.cn}

\title{Type problem, the first eigenvalue and Hardy inequalities}
\author{Gilles Carron,  Bo-Yong Chen and Yuanpu Xiong}

\date{}

\begin{abstract}

In this paper,  we study the relationship between the type problem and the asymptotic behaviour of the first (Dirichlet) eigenvalues $\lambda_1(B_r)$ of ``balls'' $B_r:=\{\rho<r\}$ on a complete Riemannian manifold $M$ as $r\rightarrow +\infty$, where $\rho$ is a Lipschitz continuous exhaustion function with $|\nabla\rho|\leq1$ a.e. on $M$.  We obtain several sharp results. First, if for all $r>r_0$
\[
 r^2 \lambda_1(B_r)\ge \gamma>0,
 \]
 we obtain a sharp estimate of the volume growth: $|B_r|\ge cr^{\mu(\gamma)}.$ Moreover when $\gamma>j_0^2\approx 5.784$,
where $j_0$ denotes the first positive zero of the Bessel function $J_0$, then $M$ is hyperbolic and we have a Hardy type inequality. In the case where $r_0=0$, a sharp Hardy type inequality holds. These spectral conditions are satisfied if one assumes that $\Delta\rho^2\geq2\mu(\gamma)>0$. In particular, when $\inf_M\Delta\rho^2>4$, $M$ is hyperbolic and we get a sharp Hardy type inequality. Related results for finite volume case are also studied.
 
\end{abstract}

\maketitle

\tableofcontents

\section{Introduction}

Let $(M,g)$ be a complete,  non-compact Riemannian manifold with $\dim{M}\geq2$,  and denote by $\Delta$ the Laplace operator associated to $g$. An upper semicontinuous function $u$ on $M$ is called {\it subharmonic} if $\Delta u\ge 0$ holds in the sense of distributions.  If every negative subharmonic function on $M$ has to be a constant,  then $M$ is said to be {\it parabolic};  otherwise $M$ is called {\it hyperbolic}.  It is well-known that $M$ is parabolic (resp.  hyperbolic) if and only if the Green function $G_M(x,y)$ is infinite (resp.  finite) for all $x\neq{y}$;  or the Brownian motion on $M$  is recurrent (resp.  transient).

The type problem is how to decide the parabolicity and   hyperbolicity through intrinsic geometric conditions.  The case of surfaces is classical,  for  the type of $M$  depends only on the conformal class of $g$,  i.e.,  the complex structure determined by $g$.  Ahlfors \cite{Ahlfors35} and Nevanlinna \cite{Nevanlinna40} first showed that $M$ is parabolic whenever
\begin{equation}\label{eq:parabolic_volume_2}
\int^{+\infty}_1\frac{dr}{|\partial{B(x_0,r)}|}=+\infty,
\end{equation}
where $B(x_0,r)$ is the geodesic ball with center  $x_0\in{M}$ and radius $r$. 
The same conclusion was extended to  high dimensional cases by  Lyons-Sullivan \cite{LyonsSullivan} and Grigor'yan \cite{Grigoryan83,Grigoryan85}.   Moreover,   \eqref{eq:parabolic_volume_1} can be relaxed to
\begin{equation}\label{eq:parabolic_volume_1}
\int^{+\infty}_1 \frac{rdr}{|B(x_0,r)|}=+\infty
\end{equation}
 (cf. Karp \cite{Karp82}, Varopolous \cite{Varopoulos} and Grigor'yan \cite{Grigoryan83,Grigoryan85}, see also Cheng-Yau \cite{ChengYau75}).  We refer to the excellent survey \cite{Grigoryan} of Grigor'yan for other sufficient conditions of parabolicity.  
 
On the other side,  it seems more difficult to find sufficient conditions for hyperbolicity.  Yet there is a classical result  stating that $M$ is hyperbolic whenever  the first (Dirichlet) eigenvalue $\lambda_1(M)$ of $M$ is positive. Recall that 
$$
\lambda_1(M): = \lim_{j\rightarrow +\infty} \lambda_1(\Omega_j)
$$
for some/any increasing sequence of precompact open sets $\{\Omega_j\}$ in $M$,  such that $M=\bigcup \Omega_j$.  Here given an open set $\Omega\subset\subset M$,   define 
$$
\lambda_1(\Omega):=\inf\left\{\frac{\int_\Omega |\nabla \phi|^2 dV}{\int_\Omega \phi^2 dV}:\phi\in \mathrm{Lip}_0(M),\ \mathrm{supp}\,\phi\subset\overline{\Omega},\ \phi\not\equiv 0 \right\},
$$
where $\mathrm{Lip}_0(M)$ denotes the set of Lipschitz continuous functions on $M$ with compact supports. 

\begin{remark}
We use the convention that $\lambda_1(\varnothing)=+\infty$.
\end{remark}

The main focus of this paper is to determine the hyperbolicity in the case  $\lambda_1(M)=0$. Grigor'yan showed that $M$ is hyperbolic if the following Faber-Krahn type inequality holds:
$$
\lambda_1(\Omega) \ge f(|\Omega|),\ \ \ \forall\,\Omega\subset\subset M: |\Omega|\ge v_0>0,
$$
where $f$ is a positive decreasing function on $(0,+\infty)$ such that $\int^{+\infty}_{v_0} \frac{dv}{v^2 f(v)}<+\infty$ (see, e.g., \cite{Grigoryan}, Theorem 10.3). We shall use certain quantity measuring the asymptotic behaviour of $\lambda_1(B_r)$ for certain ``balls'' $B_r$ as $r\rightarrow +\infty$, which seems to be easier to analyse. More precisely, let us first fix  a nonnegative locally Lipschitz continuous function $\rho$ on $M$, which is an exhaustion function (i.e., $B_r:=\{\rho<r\}\subset\subset{M}$ for any $r>0$), such that $|\nabla \rho|\le 1$ holds a.e.  on $M$. Note that  if $\rho$ is the distance ${\rm dist}_M(x_0,\cdot)$ from some $x_0\in M$,  then $B_r$ is precisely the geodesic ball $B(x_0,r)$.

To state the main result, we denote by $\lambda_\mu$ the first eigenvalue of the Laplace operator on $[0,1)$ for the Dirichlet condition at $s=1$ and with respect to the measure $s^{\mu-1}ds$, i.e.,
\begin{equation}\label{eq:lambda_mu}
\lambda_\mu=\inf\left\{ \frac{\int_0^1 \left|\psi'(s)\right|^2 s^{\mu-1}ds}{\int_0^1 \left|\psi(s)\right|^2 s^{\mu-1}ds}:  \ \psi\in \mathrm{Lip}([0,1]),\ \psi(1)=0,\ \psi\not\equiv0\right\}.
\end{equation}
It is known that $\lambda_\mu=j^2_{\mu/2-1}$,  where $j_\nu$ is the first positive zero of the Bessel function $J_\nu$,  with the infimum $\lambda_\mu$ realized by
\begin{equation}\label{eq:psi_mu}
\psi_\mu(s):=s^{1-\mu/2}J_{\mu/2-1}\left(\sqrt{\lambda_\mu} s\right).
\end{equation}
We have

\begin{theorem}\label{th:main}
Suppose that
\begin{equation}\label{eq:eigenvalue_quadratic_decay}
\lambda_1(B_r)\geq\frac{\lambda_\mu}{r^2},
\end{equation}
holds for all $r\geq{r_0}$. Then the following properties hold:

\begin{itemize}
\item[$(1)$]
There is a constant $c>0$ such that 
\begin{equation}\label{eq:sharp_volume_growth}
|B_r|\geq {c}r^\mu,\ \ \ \forall\,r\geq{r_0}.
\end{equation}
Here, the constant $c$ might depend on the geometry of $B_{r_0}$.

\item[$(2)$]
If $\mu>2$, then $M$ is hyperbolic and the Hardy type inequality
\begin{equation}\label{eq:HardyIneq}
C\int_M\frac{u^2}{1+\rho^2}dV\leq\int_M|\nabla{u}|^2dV
\end{equation}
holds for some $C>0$ and for any $u\in\mathrm{Lip}_0(M)$.

\item[$(3)$]
If $\mu>2$ and \eqref{eq:eigenvalue_quadratic_decay} holds for all $r>0$, then the sharp Hardy type inequality
\begin{equation}\label{eq:Hardy_sharp}
\left(\frac{\mu-2}{2}\right)^2\int_M\frac{u^2}{\rho^2}dV \leq \int_M|\nabla{u}|^2dV
\end{equation}
holds for any $u\in\mathrm{Lip}_0(M)$.
\end{itemize}
\end{theorem}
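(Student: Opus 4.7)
The plan is to build a single toolkit for all three parts: use $\psi_\mu(\rho/r)$ or $\rho^{(2-\mu)/2}$ as a test weight and, via $|\nabla\rho|\le 1$, push the computation onto the one-dimensional Stieltjes measure $dV(s)$ associated to $V(s)=|B_s|$.

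For part (1), I would plug $\phi=\psi_\mu(\rho/r)$ into the Rayleigh quotient for $\lambda_1(B_r)$. Since $\psi_\mu(1)=0$ and $|\nabla\rho|\le 1$, the hypothesis translates into
\[
\int_0^r \psi_\mu'(s/r)^2\,dV(s)\ \ge\ \lambda_\mu\int_0^r \psi_\mu(s/r)^2\,dV(s).
\]
Two integrations by parts combined with the Bessel equation $(s^{\mu-1}\psi_\mu')'+\lambda_\mu s^{\mu-1}\psi_\mu=0$ recast this as a Sturm--Liouville comparison between $dV$ and the model measure $s^{\mu-1}ds$, yielding a differential inequality for $V(r)$ whose solutions satisfy $V(r)\ge c\,r^\mu$. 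Anchoring at the fixed positive value $V(r_0)$ then produces the claimed bound $|B_r|\ge c\,r^\mu$.

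For part (2), I would run a dyadic Faber--Krahn argument. On each annulus $A_k=B_{2^{k+1}r_0}\setminus B_{2^kr_0}$ the hypothesis gives $\int|\nabla u|^2\ge (\lambda_\mu/4)\int u^2/\rho^2$ for $u$ supported in $A_k$, with constant independent of $k$. A smooth partition of unity $\{\chi_k^2\}$ patches these local inequalities, the residual error $\sum\int|\nabla\chi_k|^2u^2$ being absorbed by the assumption $\mu>2$ (i.e.\ $\lambda_\mu>j_0^2>4$) which provides the necessary slack in the constants. After handling the inner region $B_{r_0}$ (where $(1+\rho^2)^{-1}$ is bounded), this yields the global inequality $C\int u^2/(1+\rho^2)\le \int|\nabla u|^2$. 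Hyperbolicity follows by Grigor'yan's criterion: for any compactly supported $\varphi\ge 0$, the supremum of $(1+\rho^2)$ on $\mathrm{supp}\,\varphi$ is finite, so Hardy yields $\int\varphi u^2\le C'\int|\nabla u|^2$, which is equivalent to the existence of a positive Green function.

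For the sharp Hardy in (3), the natural ansatz is $u=\rho^{(2-\mu)/2}v$. Expanding $|\nabla u|^2$ and integrating by parts against $\nabla(\rho^{2-\mu})$ produces the identity
\[
\int|\nabla u|^2\,dV = \int\rho^{2-\mu}|\nabla v|^2\,dV - \frac{\mu(\mu-2)}{4}\int\frac{u^2|\nabla\rho|^2}{\rho^2}dV + \frac{\mu-2}{2}\int\frac{u^2\Delta\rho}{\rho}dV,
\]
and one must show that the sum of the last two integrals dominates $((\mu-2)/2)^2\int u^2/\rho^2$. My plan is to work with a cutoff $\chi_\epsilon$ supported in $\{\rho\ge\epsilon\}$ and use $\lambda_1(B_\epsilon)\ge\lambda_\mu/\epsilon^2$ (available thanks to $r_0=0$) to show that the boundary contribution on $\{\rho=\epsilon\}$ produced by the singular weight vanishes as $\epsilon\to 0$. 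The main obstacle---and what makes the hypothesis at every scale crucial---is that pointwise, $\rho^{(2-\mu)/2}$ is a supersolution of $-\Delta-((\mu-2)/2)^2\rho^{-2}$ only if $\Delta\rho^2\ge 2\mu$, and this pointwise bound is \emph{not} assumed. The sharp constant must therefore be extracted by balancing $\int\rho^{2-\mu}|\nabla v|^2\ge 0$ against the indeterminate $\Delta\rho$-term using only the integrated eigenvalue bound, and making that balance rigorous is the heart of the argument.
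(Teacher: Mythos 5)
Your proposal starts from the same place as the paper (the test functions $\psi_\mu(\rho/r)$ and $u\,\Phi(\rho/r)$, reduction to the pushforward measure), but in each part the step that actually produces the theorem is missing. For (1), the single--scale inequality $\int_0^r\psi_\mu'(s/r)^2\,dV(s)\ge\lambda_\mu\int_0^r\psi_\mu(s/r)^2\,dV(s)$ does \emph{not} integrate by parts into a usable differential inequality for $V(r)$: writing $F_r(s)=\psi_\mu'(s/r)^2-\lambda_\mu\psi_\mu(s/r)^2$, one gets $\psi_\mu'(1)^2V(r)\ge\int_0^rF_r'(s)V(s)\,ds$ with $F_r'=\frac{2}{r}\bigl(-\frac{(\mu-1)r}{s}\psi_\mu'^2-2\lambda_\mu\psi_\mu\psi_\mu'\bigr)$ of indeterminate sign, and no Sturm--Liouville comparison is supplied. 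The paper's actual mechanism is to integrate the \emph{family} of inequalities over the scale, against $dr/r^{\mu+1}$ on $[r_0,\overline r]$, apply Fubini, and use the exact identity $\int_a^b(\lambda_\mu\psi_\mu^2-\psi_\mu'^2)s^{\mu-1}ds=\bigl[-\psi_\mu'\psi_\mu s^{\mu-1}\bigr]_a^b$ so that everything collapses to boundary terms; this yields the monotonicity of $r^{-\mu}\int_{B_r}\bigl(-\psi_\mu'\psi_\mu\bigr)(\rho/r)\,\frac{dV}{\rho/r}$ and hence $|B_r|\ge cr^\mu$. That averaging-over-scales device is the idea your sketch lacks.

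For (2), the dyadic patching does not close quantitatively. With annuli of ratio $R$ the local constant you can extract from $\lambda_1(B_{R^{k+1}r_0})\ge\lambda_\mu/(R^{k+1}r_0)^2$ is at best $\lambda_\mu/R^2$ (worse once each $\chi_k$ overlaps neighbouring annuli), while the IMS error $\sum_k\int|\nabla\chi_k|^2u^2$ costs on the order of $\frac{R^2}{(R-1)^2}\int u^2/\rho^2$; optimizing in $R$ one needs roughly $\lambda_\mu>32$, whereas $\mu>2$ only guarantees $\lambda_\mu>j_0^2\approx5.78$. So the claimed absorption fails precisely in the delicate range $\mu$ close to $2$. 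The paper avoids any such loss: it integrates the inequality for $u\Phi(\rho/r)$ against $dr/r$ over $(r_0,\infty)$, the cross term vanishes exactly because $\int_0^1\Phi'\Phi\,ds=\tfrac12(\Phi(1)^2-\Phi(0)^2)=0$, and the identity $\int_0^1\Phi'^2s\,ds=\lambda_\mu\int_0^1\Phi^2s\,ds-\bigl(\frac{\mu-2}{2}\bigr)^2\int_0^1\Phi^2\frac{ds}{s}$ leaves only an error supported in $B_{r_0}$; hyperbolicity is then obtained \emph{first}, from the volume growth of (1) and the capacity criterion, and is subsequently used to absorb $\int_{B_{r_0}}u^2$ --- the reverse of your order of deduction. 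For (3) you candidly state that the key balance in the ground-state transform with weight $\rho^{(2-\mu)/2}$ is not carried out, and indeed it cannot be without information on $\Delta\rho$; the paper obtains the sharp constant by the same $r$-integration with $r_0=\inf_M\rho$ (after regularizing $\rho_\varepsilon=\sqrt{\rho^2+\varepsilon^2}$, a step you also need but do not address), which makes both error terms vanish identically. As it stands, all three parts have genuine gaps.
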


\begin{remark}
$(a)$ It is well-known that if $M=\mathbb{R}^n$ and $\rho(x)=|x|$ then
\[
\lambda_1(B(0,r))=\frac{\lambda_n}{r^2}=\frac{j_{n/2-1}^2}{r^2},\ \ \ \forall\,r>0.
\]
Thus the volume growth in \eqref{eq:sharp_volume_growth} is sharp for $\mu=n$ and \eqref{eq:Hardy_sharp} reduces to the classical Hardy inequality for $\mu=n\geq3$.

$(b)$ \eqref{eq:sharp_volume_growth} also follows from the sharp Hardy type inequality (cf. Carron \cite{Carron19}, Proposition 2.26). Indeed, \eqref{eq:Hardy_sharp} implies the following reverse doubling property of order $\mu$ (cf. Lansade \cite{Lansade}, Proposition 5.2):
\[
\frac{|B_R|}{|B_r|}\geq{c_\mu}\frac{R^\mu}{r^\mu},\ \ \ \forall\,R>r>0.
\]
\end{remark}

Define
\[
\Lambda_*:=\liminf_{r\rightarrow +\infty} \{ r^2 \lambda_1(B_r)\}.  
\]
Note that $\mu\mapsto\lambda_\mu$ is a strict increasing continuous function when $\mu\geq2$ (cf. \cite{Elbert}). Thus a direct consequence of Theorem \ref{th:main} is the following

\begin{corollary}\label{cor:hyperbolic}
If\/ $\Lambda_*>\lambda_2=j_0^2\approx5.784$, then $M$ is hyperbolic. In other words, if $M$ is parabolic, then $\Lambda_*\leq{j_0^2}$.
\end{corollary}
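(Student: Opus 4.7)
The plan is to reduce the corollary directly to part (2) of Theorem \ref{th:main}. The hypothesis $\Lambda_* > j_0^2 = \lambda_2$ together with the fact (quoted from \cite{Elbert}) that $\mu \mapsto \lambda_\mu$ is strictly increasing and continuous on $[2,+\infty)$ and takes the value $\lambda_2 = j_0^2$ at $\mu = 2$ will allow me to interpolate and pick up an exponent strictly larger than $2$.

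More precisely, first I would choose $\mu > 2$ close enough to $2$ so that $\lambda_\mu < \Lambda_*$; this is possible by the continuity of $\mu\mapsto\lambda_\mu$ at $\mu = 2$, since $\Lambda_* - \lambda_2 > 0$. Then, by the very definition of $\liminf$, there exists $r_0 > 0$ such that
\[
r^2\lambda_1(B_r) \geq \lambda_\mu,\ \ \ \forall\,r\geq r_0,
\]
which is exactly the hypothesis \eqref{eq:eigenvalue_quadratic_decay} of Theorem \ref{th:main} for this particular $\mu$. Applying part (2) of Theorem \ref{th:main} with this $\mu > 2$, I conclude that $M$ is hyperbolic (and as a bonus, the Hardy inequality \eqref{eq:HardyIneq} holds). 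The contrapositive gives the second formulation: if $M$ is parabolic, then $\Lambda_* \leq j_0^2$.

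There is no real obstacle: the only thing to be careful about is that the strict inequality $\Lambda_* > \lambda_2$ is what allows the selection of $\mu > 2$ with $\lambda_\mu$ still strictly below $\Lambda_*$, so that for large $r$ one gets $\lambda_1(B_r) \geq \lambda_\mu/r^2$. If one only had $\Lambda_* \geq \lambda_2$ the argument would collapse, because one would be forced to take $\mu = 2$ and part (2) of Theorem \ref{th:main} requires $\mu > 2$.
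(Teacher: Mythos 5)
Your proposal is correct and is exactly the argument the paper intends: it presents the corollary as a direct consequence of Theorem \ref{th:main}/(2) after noting the continuity and strict monotonicity of $\mu\mapsto\lambda_\mu$ on $[2,+\infty)$, which is precisely the interpolation you perform to select $\mu>2$ with $\lambda_\mu<\Lambda_*$.
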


\begin{remark}
We have already said that 
for $M=\mathbb{R}^2$ and $\rho(x)=|x|$ then
\[
\lambda_1(B(0,r))=\frac{\lambda_2}{r^2}=\frac{j_{0}^2}{r^2},
\]
so that $\Lambda_*=j^2_{0}$.   Since $\mathbb R^2$ is parabolic,  so the above result turns out to be  the best possible.
\end{remark}

Conversely, it is natural to ask whether there exists a universal constant $c_0$ such that $M$ is parabolic whenever $\Lambda_*\leq{c_0}$. The answer is, however, negative (see Example \ref{eg:counter_example} in \S\,\ref{sec:eg}).

Theorem \ref{th:main}/(1) allows us to estimate $\Lambda_*$ through volume growth conditions.  Cheng-Yau \cite{ChengYau75} showed that $\lambda_1(M)=0$ if $M$ has polynomial volume growth.   This was extended by Brooks \cite{Brooks},  who showed that 
if the volume $|M|$ of $M$ is infinite,  then
\[
\lambda_1(M) \le \frac{{\mu^*}^2}{4},\ \ \  \mu^*:=\limsup_{r\rightarrow +\infty} \frac{\log |B(x_0,r)|}{r}.
\]
Refined results for ends of  complete Riemannian manifolds are obtained by Li-Wang \cite{LiWang} (see also Carron \cite{Carron19}, \S\,2.4).  The following consequence of Theorem \ref{th:main} may be viewed as a quantitative version of the theorem of Cheng-Yau.

\begin{corollary}\label{cor:Upper}
If\/ $\nu_*:=\liminf_{r\rightarrow +\infty} \frac{\log |B_r|}{\log r}$,  then ${\Lambda_*} \leq\lambda_{\nu_*}=j_{\nu_*/2-1}^2$. In particular,  we have 
\begin{enumerate}
\item[$(1)$] $\Lambda_*=0$\/ if\/ $\nu_*=0;$
\item[$(2)$]
${\Lambda_*} \lesssim\nu_*$\/ if\/   $0<\nu_*\ll1;$
\item[$(3)$]
${\Lambda_*} \lesssim\nu_*^2$\/ if\/ $\nu_*\gg1$.
\end{enumerate}
\end{corollary}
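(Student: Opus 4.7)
The plan is to derive the main inequality $\Lambda_*\leq\lambda_{\nu_*}$ by contraposition of Theorem \ref{th:main}/(1). Suppose that $\Lambda_*>\lambda_\mu$ for some fixed $\mu>0$. By the definition of $\Lambda_*$ as a liminf, there is $r_0>0$ such that $r^2\lambda_1(B_r)\geq\lambda_\mu$ for every $r\geq r_0$, so hypothesis \eqref{eq:eigenvalue_quadratic_decay} of Theorem \ref{th:main} is satisfied and part $(1)$ yields $|B_r|\geq cr^\mu$ on $[r_0,+\infty)$. Hence
\[
\nu_*=\liminf_{r\to+\infty}\frac{\log|B_r|}{\log r}\geq\mu.
\]
Contrapositively, any $\mu>\nu_*$ forces $\Lambda_*\leq\lambda_\mu$. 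Letting $\mu\searrow\nu_*$ and using continuity of $\mu\mapsto\lambda_\mu$ on $(0,+\infty)$---a routine consequence of the variational formula \eqref{eq:lambda_mu}---I obtain $\Lambda_*\leq\lambda_{\nu_*}$.

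The quantitative conclusions $(1)$--$(3)$ now follow from the behavior of $\lambda_\mu=j_{\mu/2-1}^2$ at the extremes $\mu\to 0^+$ and $\mu\to+\infty$. For small $\mu$, I would test \eqref{eq:lambda_mu} against $\psi(s)=1-s^2$: direct integration gives numerator $4/(\mu+2)$ and denominator $8/[\mu(\mu+2)(\mu+4)]$, so the Rayleigh quotient equals $\mu(\mu+4)/2$. This yields
\[
\lambda_\mu\leq\frac{\mu(\mu+4)}{2}=2\mu+O(\mu^2)\quad\text{as }\mu\to 0^+,
\]
giving $(1)$ since $\Lambda_*\leq\lambda_\mu\to 0$ as $\mu\to 0^+$, and $(2)$ by applying the bound at any $\mu$ slightly larger than $\nu_*$. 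For $(3)$, the classical asymptotic $j_\nu=\nu+O(\nu^{1/3})$ as $\nu\to+\infty$ gives $\lambda_\mu\sim\mu^2/4$, whence $\Lambda_*\leq\lambda_{\nu_*}\lesssim\nu_*^2$ when $\nu_*\gg 1$.

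All the substantive work is done by Theorem \ref{th:main}/(1); the deduction above is short. The only point requiring explicit verification is that the continuity of $\mu\mapsto\lambda_\mu$ (invoked in the excerpt only for $\mu\geq 2$) extends to all $\mu>0$. I do not expect a real obstacle here: the extension is immediate from the variational characterization \eqref{eq:lambda_mu} together with continuous dependence of the reference measure $s^{\mu-1}\,ds$ on $\mu$ in an appropriate sense on $(0,1]$.
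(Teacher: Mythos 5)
Your proposal is correct and follows essentially the same route as the paper: both deduce $\Lambda_*\leq\lambda_{\nu_*}$ by playing Theorem \ref{th:main}/(1) against the definition of $\nu_*$ for $\mu$ slightly above $\nu_*$ and then invoking continuity of $\mu\mapsto\lambda_\mu$, and both get (1)--(3) from the behaviour of $j_{\mu/2-1}^2$ at the two extremes. The only (harmless) difference is that you replace the paper's citation of Piessens' asymptotic $j_\nu\sim 2\sqrt{\nu+1}$ by an explicit test function $\psi(s)=1-s^2$ giving $\lambda_\mu\leq\mu(\mu+4)/2$, which is a correct, self-contained substitute.
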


When $|M|<+\infty$, we have $\nu_*=0$, so that $\lambda_1(B_r)$ decays faster than quadratically as $r\rightarrow+\infty$. More precisely, we have

\begin{proposition}\label{prop:volume_exponential}
If\/ $|M|<\infty$,  then  
\begin{equation}\label{eq:volume_exponential}
\widetilde{\Lambda}_*:=\liminf_{r\rightarrow+\infty}\frac{-\log\lambda_1(B_r)}{r}\geq \alpha_*:=\liminf_{r\rightarrow+\infty}\frac{-\log|M\setminus B_r|}{r}.
\end{equation}
\end{proposition}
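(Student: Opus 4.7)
The plan is to produce an upper bound on $\lambda_1(B_r)$ by means of an explicit test function, and then convert this into the asserted $\liminf$ inequality. The heuristic is that when $|M|<\infty$, most of the $L^2$-mass of a good test function can be concentrated inside $B_s$ for some $s<r$ that is still close to $r$, so the only cost in the Rayleigh quotient comes from the thin transition region $B_r\setminus B_s$, which has small volume (exponentially small, by the definition of $\alpha_*$).

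\textbf{Step 1 (test function).} For $0<s<r$, I would use the Lipschitz cutoff
\[
u_{r,s}:=\min\!\Bigl\{1,\,\frac{(r-\rho)_+}{r-s}\Bigr\},
\]
which is supported in $\overline{B_r}$, equals $1$ on $B_s$, and by $|\nabla\rho|\le 1$ a.e.\ satisfies $|\nabla u_{r,s}|\le (r-s)^{-1}\mathbf{1}_{B_r\setminus B_s}$. Plugging $u_{r,s}$ into the Rayleigh quotient defining $\lambda_1(B_r)$ yields
\[
\lambda_1(B_r)\;\le\;\frac{|M\setminus B_s|}{(r-s)^2\,|B_s|}.
\]

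\textbf{Step 2 (choice of $s$).} I would then choose $s=s(r)$ so that simultaneously $s/r\to1$ (to absorb $|B_s|\to |M|$, using $|M|<\infty$) and $r-s\to\infty$ (so the polynomial factor $(r-s)^{-2}$ is logarithmically negligible at the exponential scale). A simple working choice is $s(r)=r-\sqrt{r}$. Taking $-\log$ of the previous inequality and dividing by $r$ gives
\[
\frac{-\log \lambda_1(B_r)}{r}\;\ge\;\frac{-\log|M\setminus B_s|}{s}\cdot\frac{s}{r}\;+\;\frac{2\log(r-s)}{r}\;+\;\frac{\log|B_s|}{r}.
\]

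\textbf{Step 3 (passage to the $\liminf$).} With $s=r-\sqrt r$, the last two terms are $O(\log r/r)=o(1)$, and $s/r\to 1$. For any $\varepsilon>0$, the definition of $\alpha_*$ provides $s_0$ such that $-\log|M\setminus B_s|\ge (\alpha_*-\varepsilon)s$ for all $s\ge s_0$ (and trivially $\alpha_*\ge 0$, since $|M\setminus B_s|\le|M|$). Therefore
\[
\liminf_{r\to+\infty}\frac{-\log\lambda_1(B_r)}{r}\;\ge\;(\alpha_*-\varepsilon)\cdot 1 + 0,
\]
and letting $\varepsilon\downarrow 0$ concludes $\widetilde{\Lambda}_*\ge\alpha_*$. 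The case $\alpha_*=+\infty$ is handled identically by letting $\varepsilon$ be replaced by an arbitrarily large $K$.

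\textbf{Main obstacle.} The delicate point is not any single estimate but the joint calibration of $s(r)$: one must keep $r-s$ large enough to avoid a blow-up of $(r-s)^{-2}$ while letting $s$ be close enough to $r$ so that the $\liminf$ on the right matches $\alpha_*$ exactly (not $\alpha_*\cdot\liminf(s/r)$ with a loss). The polynomial gap $s=r-\sqrt{r}$ resolves this, because the cutoff transition cost contributes only $O(\log r)$ to $-\log\lambda_1(B_r)$, which is negligible at the exponential scale driving $\alpha_*$.
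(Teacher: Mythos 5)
Your proof is correct and follows essentially the same route as the paper: both rest on the test-function bound $\lambda_1(B_r)\,|B_s|\le (r-s)^{-2}\,|M\setminus B_s|$ obtained from a radial Lipschitz cutoff, combined with $|B_s|\to|M|<\infty$. The only difference is bookkeeping: the paper takes $s=\delta r_k$ along a sequence realizing the liminf defining $\widetilde{\Lambda}_*$ and then lets $\delta\to1^-$ and $\varepsilon\to0^+$, whereas your calibration $s=r-\sqrt{r}$ achieves the same limit in a single pass.
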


It follows from Proposition \ref{prop:volume_exponential} that $\lambda_1(B_r)$ decays exponentially if $|M\setminus{B_r}|$ decays exponentially as $r\rightarrow+\infty$. On the other hand, the relationship of $\lambda_1(M\setminus{B_r})$ and $|M\setminus{B_r}|$ is studied by Brooks \cite{Brooksfinite}, who proved that if $|M|<\infty$,  then
$\lambda_1(M\setminus{B_r}) \le \frac{{\alpha^*}^2}{4}$ for all $r>0$, where
\[
\alpha^*:=\limsup_{r\rightarrow +\infty} \frac{-\log |M\setminus B_r|}{r}.
\]
A more precise version of Brooks' result is also proved by Li-Wang \cite{LiWang}.

Motivated by a result of Dodziuk-Pignataro-Randol-Sullivan \cite{DPRS}, we present an example in \S\,\ref{sec:eg} showing that the estimate in Proposition \ref{prop:volume_exponential} is sharp. Some other examples such that $\lambda_1(B_r)$ have various decaying behaviours are also given in \S\,\ref{sec:eg}.  In particular, Example \ref{eg:counter_example} shows that $\widetilde{\Lambda}_*>0$ does not necessarily imply $|M|<\infty$, i.e., the assumption that $M$ has finite volume in Proposition \ref{prop:volume_exponential} cannot be removed.

We also show that \eqref{eq:eigenvalue_quadratic_decay} holds under suitable condition on $\rho$. 

\begin{proposition}\label{prop:eigenvalue}
Suppose that $\rho$ is a nonnegative locally Lipschitz continuous exhaustion function on $M$ such that $|\nabla \rho|\le 1$ a.e. and $\Delta \rho^2 \geq 2\mu$ in the sense of distributions. Then
\begin{equation}\label{eq:eigenvalue_quadratic_decay_1}
\lambda_1(B_r)\geq\frac{\lambda_\mu}{r^2},\ \ \ \forall\,r>0.
\end{equation}
\end{proposition}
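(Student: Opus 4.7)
The plan is to prove the eigenvalue bound via a Barta-type test function argument, using the pullback of the one-dimensional Bessel extremal. Set $\Phi(x):=\psi_\mu(\rho(x)/r)$, which is Lipschitz on $\overline{B_r}$, vanishes on $\partial B_r$, and is strictly positive in the interior; since the first Bessel eigenfunction is decreasing, $\psi_\mu'\le 0$ on $[0,1]$. The goal is the pointwise bound $-\Delta\Phi/\Phi\ge \lambda_\mu/r^2$, which then gives $\lambda_1(B_r)\ge\lambda_\mu/r^2$ by the standard Rayleigh comparison.

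First I would run the calculation pretending $\rho$ is smooth. Setting $s:=\rho/r$ and using the Bessel ODE $\psi_\mu''(s)=-(\mu-1)s^{-1}\psi_\mu'(s)-\lambda_\mu\psi_\mu(s)$, a direct chain-rule computation gives
\[
-\Delta\Phi \;=\; \frac{\psi_\mu'(s)}{r\rho}\bigl[(\mu-1)|\nabla\rho|^2-\rho\Delta\rho\bigr] \;+\; \frac{\lambda_\mu|\nabla\rho|^2}{r^2}\,\Phi.
\]
The hypothesis $\Delta\rho^2\ge 2\mu$ rewrites as $\rho\Delta\rho\ge \mu-|\nabla\rho|^2$; combined with $|\nabla\rho|^2\le 1$ this yields $(\mu-1)|\nabla\rho|^2-\rho\Delta\rho\le -\mu(1-|\nabla\rho|^2)\le 0$, and together with $\psi_\mu'\le 0$ we obtain
\[
-\Delta\Phi \;\ge\; \frac{-\mu\,\psi_\mu'(s)}{r\rho}\,(1-|\nabla\rho|^2) \;+\; \frac{\lambda_\mu|\nabla\rho|^2}{r^2}\,\Phi.
\]
The key remaining point is the one-dimensional inequality $-\mu\,\psi_\mu'(s)\ge \lambda_\mu\, s\,\psi_\mu(s)$ on $(0,1)$. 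I would prove this by writing the Bessel equation in Sturm--Liouville form $(s^{\mu-1}\psi_\mu')'=-\lambda_\mu s^{\mu-1}\psi_\mu$, integrating from $0$ to $s$ (using $\psi_\mu'(0)=0$) to get $-s^{\mu-1}\psi_\mu'(s)=\lambda_\mu\int_0^s t^{\mu-1}\psi_\mu(t)\,dt$, and using the monotonicity $\psi_\mu(t)\ge \psi_\mu(s)$ for $t\le s$ to bound the integral below by $\psi_\mu(s)s^\mu/\mu$. Dividing the displayed bound by $\Phi=\psi_\mu(s)$ and applying this inequality produces exactly $-\Delta\Phi/\Phi\ge \lambda_\mu(1-|\nabla\rho|^2)/r^2 + \lambda_\mu|\nabla\rho|^2/r^2 = \lambda_\mu/r^2$.

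To conclude, for any $u\in\mathrm{Lip}_0(M)$ with $\mathrm{supp}\,u\subset\overline{B_r}$, I would perturb $r\mapsto r+\varepsilon$ so that $\Phi_\varepsilon:=\psi_\mu(\rho/(r+\varepsilon))$ is strictly positive on $\overline{B_r}$, substitute $u=\Phi_\varepsilon w$, and integrate by parts to obtain
\[
\int_{B_r}|\nabla u|^2\,dV \;=\; \int \Phi_\varepsilon^2|\nabla w|^2\,dV + \int \frac{-\Delta\Phi_\varepsilon}{\Phi_\varepsilon}\,u^2\,dV \;\ge\; \frac{\lambda_\mu}{(r+\varepsilon)^2}\int u^2\,dV,
\]
and let $\varepsilon\to 0$. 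The main obstacle is technical rigor: $\rho$ is only Lipschitz and $\Delta\rho^2\ge 2\mu$ holds distributionally, so the pointwise chain-rule computation for $\Delta\Phi$ does not literally apply. I would handle this either by mollifying $\rho$ and checking stability of the above bounds in the limit, or---more intrinsically---by reformulating the argument in integrated form: test the distributional inequality $\Delta\rho^2-2\mu\ge 0$ against the nonnegative Lipschitz function $F:=-\tilde\phi'(\rho^2)\,u^2$, where $\tilde\phi(t):=\psi_\mu(\sqrt{t}/r)$, and combine with the one-dimensional Bessel bound to produce the desired Rayleigh inequality directly.
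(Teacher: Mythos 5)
Your proposal is correct and follows essentially the same route as the paper: the same test function $\psi_\mu(\rho/r)$, the same ground-state substitution $u=\phi v$, and the same differential inequality $\Delta\,\psi_\mu(\rho/r)\le -\lambda_\mu r^{-2}\psi_\mu(\rho/r)$, with your key one-dimensional bound $-\mu\psi_\mu'(s)\ge\lambda_\mu s\,\psi_\mu(s)$ being, via the Bessel ODE, identical to the paper's inequality $\psi_\mu'(s)/s-\psi_\mu''(s)\le 0$ (which they prove instead from the recurrence $\psi_\mu'(s)=-s\psi_{\mu+2}(s)$). The distributional chain-rule issue you flag at the end is exactly what the paper's Lemma 4.1 resolves, and the second fix you sketch (testing $\Delta\rho^2\ge 2\mu$ against the nonnegative Lipschitz function $-\tilde\phi'(\rho^2)u^2$ with $\tilde\phi(t)=\psi_\mu(\sqrt{t}/r)$) is the one that works, so there is no genuine gap.
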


Proposition \ref{prop:eigenvalue} together with Theorem \ref{th:main} immediately yield the following

\begin{corollary}[cf. \cite{Carron97}]
If $\Delta \rho^2 \geq 2\mu>4$, then $M$ is hyperbolic and the sharp Hardy type inequality \eqref{eq:Hardy_sharp} holds.
\end{corollary}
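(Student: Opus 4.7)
The statement is a direct combination of the two preceding results, so the plan is essentially to check that the hypotheses line up. First I would apply Proposition \ref{prop:eigenvalue} with the given constant $\mu$: the assumption $\Delta\rho^2\geq 2\mu$ (in the sense of distributions) is exactly what that proposition requires, so it delivers the spectral bound
\[
\lambda_1(B_r)\geq\frac{\lambda_\mu}{r^2}
\]
for \emph{every} $r>0$. The important point is that the hypothesis on $\Delta\rho^2$ is global, which allows the eigenvalue estimate to hold down to $r=0$, not merely for $r\ge r_0$; this matters because part (3) of Theorem \ref{th:main} requires the estimate to hold for all $r>0$ in order to get the sharp constant.

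Next, since the assumption $2\mu>4$ is the same as $\mu>2$, both part (2) and part (3) of Theorem \ref{th:main} become available. Part (2) gives the hyperbolicity of $M$, and part (3) produces the sharp Hardy inequality \eqref{eq:Hardy_sharp} with constant $\bigl(\tfrac{\mu-2}{2}\bigr)^2>0$. That finishes the corollary; alternatively, one could derive hyperbolicity as a consequence of \eqref{eq:Hardy_sharp} itself, since the Hardy inequality implies $\lambda_1(M\setminus\overline{B_R})>0$ for $R$ large, which forces $M$ to be hyperbolic.

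There is no genuine obstacle here, as the entire argument is a packaging of Proposition \ref{prop:eigenvalue} and Theorem \ref{th:main}/(3). The only things to verify are the bookkeeping items above: that $2\mu>4\Longleftrightarrow\mu>2$ so that the Hardy constant is strictly positive, and that the global hypothesis on $\Delta\rho^2$ yields the eigenvalue bound on all of $(0,+\infty)$ rather than only on a half-line $[r_0,+\infty)$.
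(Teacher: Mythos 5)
Your proposal is correct and matches the paper exactly: the corollary is stated as an immediate consequence of Proposition \ref{prop:eigenvalue} (which gives $\lambda_1(B_r)\geq\lambda_\mu/r^2$ for all $r>0$) combined with Theorem \ref{th:main}/(2) and (3), which is precisely your argument. Your bookkeeping observations (that $2\mu>4\Leftrightarrow\mu>2$ and that the global hypothesis yields the eigenvalue bound on all of $(0,+\infty)$, as required for the sharp constant in part (3)) are the right points to check.
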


Proposition \ref{prop:eigenvalue} implies several well-known results. First, if $M$ is an $n$-dimensional Cartan-Hadamard manifold and $\rho$ is the geodesic distance function, then Proposition \ref{prop:eigenvalue} together with the Hessian comparison theorem yield $\Lambda_*\geq\lambda_n$. In particular, it follows from Theorem \ref{th:main} that $M$ is hyperbolic when $n\geq3$ (cf. Ichihara \cite{Ichihara1,Ichihara2}, see also Grigor'yan \cite{Grigoryan}, Theorem 15.3). Next, if $M$ is a complete $n$-dimensional minimal submanifold in $\mathbb{R}^N$ and $\rho$ is the restriction of Euclidean distance to a given point $x_0\in\mathbb{R}^N$, then $\Delta\rho^2\geq2n$, so that
\begin{equation}\label{eq:minimal_submanifolds_lambda_1}
\lambda_1(B^N(x_0,r)\cap{M})\geq\frac{\lambda_n}{r^2},
\end{equation}
in view of Proposition \ref{prop:eigenvalue}. Here $B^N(x_0,r)$ is a Euclidean ball in $\mathbb{R}^N$. \eqref{eq:minimal_submanifolds_lambda_1} was first proved by Cheng-Li-Yau \cite{CLY} by heat kernel method. In particular, $M$ is hyperbolic for $n\geq3$, which can also be proved by the isoperimetric inequality of Michael-Simon \cite{MS} and Theorem 8.2 in \cite{Grigoryan}.

\subsection*{Comments}
In an early version of this paper,  the last two authors obtained the conclusion of Corollary \ref{cor:hyperbolic} under a worse condition $\Lambda_\ast > 18.624\cdots$. Shortly afterwards, the first author suggested some ideas for improving certain results. We then decided to write a joint paper on the subject, and the improvements found in this paper are the result of this collaboration.

\section{Proof of Theorem \ref{th:main}}\label{sec:main}
\subsection{Bessel's functions}

Assume that $\nu>-1$ and $\mu>0$. The Bessel function $J_\nu$ is given by
\[
J_\nu(t)=\sum^\infty_{m=0}\frac{(-1)^m}{m!\Gamma(m+\nu+1)}\left(\frac{t}{2}\right)^{2m+\nu},
\]
which is a solution of the ODE
\[
t^2J_\nu''(t)+tJ_\nu'(t)+(t^2-\nu^2)J_\nu(t)=0.
\]
Thus $\psi_\mu(s)=s^{1-\mu/2}J_{\mu/2-1}(\sqrt{\lambda_\mu}s)$ satisfies
\begin{equation}\label{eq:ODE_psi_mu}
\psi_\mu''(s)+\frac{\mu-1}{s}\psi_\mu'(s)+\lambda_\mu\psi_\mu(s)=0,
\end{equation}
where $\sqrt{\lambda_\mu}=j_{\mu/2-1}$ is the first positive zero of $J_{\mu/2-1}$. In particular, $\psi_\mu(1)=0$. Moreover, we have
\[
\psi_\mu(s)=\sum^\infty_{m=0}\frac{(-1)^m}{m!\Gamma(m+\mu/2)}\left(\frac{\sqrt{\lambda_\mu}s}{2}\right)^{2m},
\]
so that $\psi_\mu'(0)=0$.

Let us first verify the following

\begin{lemma}\label{lm:psi_mu_integral}
The following properties hold:
\begin{itemize}
\item[$(1)$]
For any $[a,b]\subset[0,1]$, we have
\begin{equation}\label{eq:intDeltacarre}
\int_a^b \left(\lambda_\mu\psi_\mu(s)^2-\psi_\mu'(s)^2\right)s^{\mu-1}ds=-\psi_\mu'(b)\psi_\mu(b)b^{\mu-1}+\psi_\mu'(a)\psi_\mu(a)a^{\mu-1}.
\end{equation}

\item[$(2)$] $\psi_\mu'(s)<0$ for all $s\in(0,1]$. More precisely,
\begin{equation}\label{eq:ODE_psi_mu_1}
\psi_\mu'(s)s^{\mu-1}=-\lambda_\mu\int^s_0\psi_\mu(t)t^{\mu-1}dt.
\end{equation}
\end{itemize}
\end{lemma}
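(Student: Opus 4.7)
The common starting point for both parts is to put the ODE \eqref{eq:ODE_psi_mu} in Sturm--Liouville form: multiplying through by $s^{\mu-1}$ converts it to
\[
\bigl(s^{\mu-1}\psi_\mu'(s)\bigr)' = -\lambda_\mu\, s^{\mu-1}\psi_\mu(s).
\]
Once this is done, both assertions follow from single integrations.

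For part~(1), I would multiply the displayed identity by $\psi_\mu(s)$, integrate from $a$ to $b$, and integrate by parts on the left-hand side. The boundary contribution is exactly $\bigl[\psi_\mu(s)\psi_\mu'(s)s^{\mu-1}\bigr]_a^b$, while the remaining interior term is $-\int_a^b \psi_\mu'(s)^2 s^{\mu-1}\,ds$; rearranging and moving the boundary term to the right yields \eqref{eq:intDeltacarre}.

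For part~(2), I would instead integrate the Sturm--Liouville form itself from $0$ to $s$. The only subtle point is the boundary behaviour at $t=0$: from the power series displayed just above, the $m=0$ term is constant, so $\psi_\mu'(t)=O(t)$ as $t\to 0^+$, and hence $t^{\mu-1}\psi_\mu'(t)=O(t^\mu)\to 0$ since $\mu>0$. This gives \eqref{eq:ODE_psi_mu_1}. To conclude $\psi_\mu'(s)<0$ on $(0,1]$, I would invoke the defining property that $\sqrt{\lambda_\mu}=j_{\mu/2-1}$ is the \emph{first} positive zero of $J_{\mu/2-1}$; together with $J_{\mu/2-1}(t)>0$ for small $t>0$, this forces $\psi_\mu>0$ on $(0,1)$, so the right-hand side of \eqref{eq:ODE_psi_mu_1} is strictly negative for $s\in(0,1]$.

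There is no genuine obstacle here: once the ODE is written in divergence form, both statements are one integration by parts away. The single point that merits attention is the vanishing of the boundary term at the origin, which is handled by reading the leading behaviour of $\psi_\mu'$ off the series.
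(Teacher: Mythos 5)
Your proposal is correct and follows essentially the same route as the paper: writing the ODE in divergence (Sturm--Liouville) form and integrating is precisely the paper's computation, which recognizes $\left(\lambda_\mu\psi_\mu^2-(\psi_\mu')^2\right)s^{\mu-1}=-\left(\psi_\mu'\psi_\mu s^{\mu-1}\right)'$ and $-\lambda_\mu\psi_\mu s^{\mu-1}=\left(\psi_\mu' s^{\mu-1}\right)'$. Your extra remarks on the vanishing of the boundary term at the origin and on the positivity of $\psi_\mu$ on $(0,1)$ (via the first-zero property of $J_{\mu/2-1}$) are correct and merely make explicit what the paper leaves implicit.
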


\begin{proof}
By \eqref{eq:ODE_psi_mu}, we have
\begin{align*}
\left(\lambda_\mu\psi_\mu(s)^2-\psi_\mu'(s)^2\right)s^{\mu-1}
=&\,\left(-\psi_\mu''(s)\psi_\mu(s)-\frac{\mu-1}{s}\psi_\mu'(s)\psi_\mu(s)-\psi_\mu'(s)^2\right)s^{\mu-1}\\
=&\,-\left(\psi_\mu'(s)\psi_\mu(s)\right)'s^{\mu-1}-(\mu-1)\psi_\mu'(s)\psi_\mu(s)s^{\mu-2}\\
=&\,-\left(\psi_\mu'(s)\psi_\mu(s)s^{\mu-1}\right)'
\end{align*}
and
\[
-\lambda_\mu\psi_\mu(s)s^{\mu-1}=\psi''_\mu(s)s^{\mu-1}+(\mu-1)\psi_\mu'(s)s^{\mu-2}=\left(\psi_\mu'(s)s^{\mu-1}\right)',
\]
from which \eqref{eq:intDeltacarre} and \eqref{eq:ODE_psi_mu_1} follow immediately.
\end{proof}
\subsection{The volume growth}
\begin{proof}[Proof of Theorem \ref{th:main}/(1)]
We first assume that $\rho>0$. Set
$
\phi=\psi_\mu\left(\rho/r\right),
$
where $\psi_\mu$ is given as \eqref{eq:psi_mu}.  Then the variational characterization of eigenvalue gives
\[
\lambda_\mu\int_{B_r}\psi_\mu(\rho/r)^2dV \leq r^2\lambda_1(B_r)\int_{B_r}\phi^2dV  \leq r^2\int_{B_r}\left|\nabla\phi\right|^2dV \leq \int_{B_r}\psi'_\mu(\rho/r)^2dV.
\]
By using the co-area formula, this can be rewritten as
\[
\lambda_\mu\int_0^r\psi_\mu(t/r)^2 d\sigma(t) \leq \int_0^r \psi'_\mu(t/r)^2 d\sigma(t),
\]
where $d\sigma(t):=(\rho)_{\#}(dV)$ is a Lebesgue-Stieltjes measure on $(0,+\infty)$.  Divide this inequality by $r^{\mu+1}$ and integrate on $r\in [r_0,\overline{r}]$,  we obtain 
\[
\lambda_\mu\int_0^{\overline{r}}\int_{\max\{r_0,t\}}^{\overline{r}} \psi_\mu(t/r)^2\frac{dr}{r^{\mu+1}} \, d\sigma(t)\leq \int_0^{\overline{r}}\int_{\max\{r_0,t\}}^{\overline{r}} \psi'_\mu(t/r)^2\frac{dr}{r^{\mu+1}} d\sigma(t),
\]
in view of Fubini's theorem. 
By using the new variable $s=t/r$, we get 
\begin{equation}\label{eq:volume_growth_1}
\lambda_\mu\int_0^{\overline{r}}\left(\int_{t/\overline{r}}^{\min\{1,t/r_0\}}\psi_\mu(s)^2s^{\mu-1}ds\right)\frac{d\sigma(t)}{t^{\mu}} \leq \int_0^{\overline{r}}\left(\int_{t/\overline{r}}^{\min\{1,t/r_0\}}\psi'_\mu(s)^2s^{\mu-1}ds\right)\frac{d\sigma(t)}{t^{\mu}}.
\end{equation}
Take $a=t/\overline{r}$ and $b=\min\{1,t/r_0\}$ when $t\leq{r_0}$ in \eqref{eq:intDeltacarre}, we infer from \eqref{eq:volume_growth_1} and the facts $\psi_\mu(1)=0$ and $\psi_\mu'\leq0$ that
\begin{align*}
&\,\int_0^{\overline{r}} -\psi'_\mu(t/\overline{r})\psi_\mu(t/\overline{r})\left(t/\overline{r}\right)^{\mu-1}\frac{d\sigma(t)}{t^{\mu}}\\
\geq&\,\int_0^{\overline{r}} -\psi'_\mu(\min\{1,t/r_0\})\psi_\mu(\min\{1,t/r_0\})(\min\{1,t/r_0\})^{\mu-1}\frac{d\sigma(t)}{t^{\mu}}\\
\geq&\,\int_0^{r_0} -\psi'_\mu(t/r_0)\psi_\mu(t/r_0)(t/r_0)^{\mu-1}\frac{d\sigma(t)}{t^{\mu}},
\end{align*}
i.e.,
\begin{equation}\label{eq:volume_growth_2}
\frac{1}{\overline{r}^\mu}\int_{B_{\overline{r}}} -\frac{\psi'_\mu(\rho/\overline{r})\psi_\mu(\rho/\overline{r})}{\rho/\overline{r}}dV \geq \frac{1}{r_0^\mu}\int_0^{r_0} -\frac{\psi'_\mu(\rho/r_0)\psi_\mu(\rho/r_0)}{\rho/r_0}dV.
\end{equation}

If we merely have $\rho\geq0$, then we may also apply the above argument to
\[
\rho_\varepsilon:=\sqrt{\rho^2+\varepsilon^2}.
\]
Note that we still have
\[
|\nabla\rho_\varepsilon|=\frac{\rho|\nabla\rho|}{(\rho^2+\varepsilon^2)^{1/2}}\leq1.
\]
Since $B_r^\varepsilon:=\{\rho_\varepsilon<r\}=B_{\sqrt{r^2-\varepsilon^2}}$, it follows that
\begin{equation}\label{eq:eigen_epsilon}
\lambda_1(B^\varepsilon_r)=\lambda_1\left(B_{\sqrt{r^2-\varepsilon^2}}\right)\geq\frac{\lambda_\mu}{r^2-\varepsilon^2}\geq\frac{\lambda_\mu}{r^2},\ \ \ \forall\,r\geq{r_{0,\varepsilon}}:=\sqrt{r_0^2+\varepsilon^2},
\end{equation}
so that \eqref{eq:volume_growth_2} still holds with $\rho$ and $r_0$ replaced by $\rho_\varepsilon$ and $r_{0,\varepsilon}$, respectively. Moreover, since $\psi_\mu$ is a $C^2$ function on $[0,1]$ with $\psi_\mu'(0)=0$, there exists a constant $A>0$ such that
\[
-\psi_\mu'(s)\psi_\mu(s)\le As,\ \ \ \forall\,s\in[0,1].
\]
Letting $\varepsilon\rightarrow0+$, we see that \eqref{eq:volume_growth_2} remains valid for $\rho$ and $r_0$ in view of the dominated convergence theorem, which yields
\[
\frac{|B_{\overline{r}}|}{\overline{r}^\mu} \geq \frac{1}{Ar_0^\mu}\int_0^{r_0} -\frac{\psi'_\mu(t/r_0)\psi_\mu(t/r_0)}{t/r_0}d\sigma(t) =:c,\ \ \ \forall\,\overline{r}\geq{r_0}. \qedhere
\]
\end{proof}

\begin{proof}[Proof of Corollary \ref{cor:Upper}]
Suppose on the contrary that $\Lambda_*>\lambda_{\nu_*}$. Since the function $\mu\mapsto\lambda_\mu$ is continuous, we have $\Lambda_*>\lambda_{\nu_*+\varepsilon}$ for  $\varepsilon\ll 1$,  so that $\lambda_1(B_r)\geq\lambda_{\nu_*+\varepsilon}/r^2$ for $r\gg1$.  By Theorem \ref{th:main}/(1), we conclude that $|B_r|\gtrsim r^{\nu_*+\varepsilon}$.  But this implies $\nu_*\geq\nu_*+\varepsilon$,  which is  impossible.

Since 
$
j_\nu\sim2\sqrt{\nu+1}
$
as $\nu\rightarrow-1+$  (cf. Piessens \cite{Piessens}),  we have
 $\lambda_{\nu_*}=j_{\nu_*/2-1}^2\sim2\nu_*$ as $\nu_*\rightarrow0+$,  from which assertions (1) and (2) immediately follow.  On the other hand,  since $j_\nu\sim\nu$ as $\nu\rightarrow+\infty$ (cf. Watson \cite{Watson}, pp. 521, see also Elbert \cite{Elbert}, \S\,1.4),  we have 
$\lambda_{\nu_*}=j_{\nu_*/2-1}^2\sim\nu_*^2/4$ as $\nu_*\rightarrow+\infty$, which implies (3).
\end{proof}

\subsection{The Hardy type inequalities}\label{subsec:Hardy}

Recall that the capacity $\mathrm{cap}(K)$ of a compact set $K\subset M$ is given by
$$
\mathrm{cap}(K):=\inf \int_M |\nabla \psi|^2 dV,
$$
where the infimum is taken over all $\psi\in\mathrm{Lip}_0(M)$ with $0\le \psi\le 1$ and $\psi|_K=1$.  The following criterion is of fundamental importance (cf. Grigor'yan \cite{Grigoryan}, Theorem 5.1 and Ancona \cite{Ancona}, pp. 46--47, see also Carron \cite{Carron07}, Definition 2.13).

\begin{theorem}[]\label{th:Capacity}
Let $(M,g)$ be a complete Riemannian manifold. Then the following properties are equivalent:

\begin{itemize}
\item[$(1)$]
$M$ is hyperbolic.

\item[$(2)$]
$\mathrm{cap}(K)>0$ for some/any compact set $K\subset M$ with non-empty interior.  

\item[$(3)$]
Given some/any open subset $U\subset\subset{M}$, there exists a constant $C=C(U)$ such that
\[
\int_Uu^2dV\leq{C(U)}\int_M|\nabla{u}|^2dV
\]
for any $u\in\mathrm{Lip}(M)$ with a compact support.
\end{itemize}

\end{theorem}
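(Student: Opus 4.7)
The plan is to cycle through the three properties, with $(3)\Rightarrow(2)$ trivial, $(2)\Rightarrow(3)$ the main analytic step via truncation and patching, and $(1)\Leftrightarrow(2)$ handled by the classical characterization of parabolicity. I assume $M$ is connected; the general case follows componentwise. For $(3)\Rightarrow(2)$: given a compact $K$ with non-empty interior, pick an open ball $U\subset\mathrm{int}(K)$; any admissible $\psi$ in the definition of $\mathrm{cap}(K)$ satisfies $\int_U\psi^2\,dV=|U|$, so $(3)$ forces $\mathrm{cap}(K)\geq |U|/C(U)>0$.

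For $(2)\Rightarrow(3)$, fix $U\subset\subset M$ and argue by contradiction: suppose $u_n\in\mathrm{Lip}_0(M)$ satisfy $\|u_n\|_{L^2(U)}=1$ and $\|\nabla u_n\|_{L^2(M)}\to 0$. After replacing $u_n$ by $|u_n|$, Chebyshev gives $|A_n|\geq |U|/2$ for $A_n:=\{u_n\geq (2|U|)^{-1/2}\}\cap U$, so $v_n:=\min(\sqrt{2|U|}\,u_n,1)$ lies in $\mathrm{Lip}_0(M)$ with values in $[0,1]$, equals $1$ on $A_n$, and still satisfies $\int|\nabla v_n|^2\leq 2|U|\int|\nabla u_n|^2\to 0$. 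By Rellich compactness on precompact sets plus a diagonal extraction, a subsequence converges in $L^2_{\mathrm{loc}}(M)$ to a function with vanishing distributional gradient, i.e.\ to a single constant $c$; the bound $\int_U v_n^2\geq |U|/2$ forces $c\geq 1/\sqrt{2}$. Setting $\eta_n:=\min(v_n/c,1)$ produces $\eta_n\to 1$ in $L^2_{\mathrm{loc}}$ with $\int|\nabla\eta_n|^2\to 0$. Finally, for any compact $K$ with non-empty interior, fix a cutoff $\chi\in\mathrm{Lip}_0(M)$ with $0\leq\chi\leq 1$ and $\chi\equiv 1$ on $K$; the patched function $\psi_n:=\eta_n(1-\chi)+\chi$ lies in $[0,1]$, equals $1$ on $K$, and the pointwise estimate $|\nabla\psi_n|^2\leq 2|\nabla\eta_n|^2+2(1-\eta_n)^2|\nabla\chi|^2$ combined with $\eta_n\to 1$ in $L^2(\mathrm{supp}\,\chi)$ yields $\int|\nabla\psi_n|^2\to 0$, so $\mathrm{cap}(K)=0$, contradicting $(2)$.

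For $(1)\Leftrightarrow(2)$, I would invoke the classical Ahlfors/Royden/Grigor'yan criterion that $M$ is parabolic iff there exist $\phi_n\in\mathrm{Lip}_0(M)$ with $0\leq\phi_n\leq 1$, $\phi_n\to 1$ in $L^2_{\mathrm{loc}}$, and $\int|\nabla\phi_n|^2\to 0$ (see Grigor'yan \cite{Grigoryan}). The same patching $\phi_n(1-\chi)+\chi$ then converts parabolicity into $\mathrm{cap}(K)=0$ for every such $K$, and conversely an admissible minimizing sequence for $\mathrm{cap}(K)=0$, combined with an exhaustion of $M$ and the identical patching, delivers the required $\phi_n$. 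This simultaneously settles the ``some/any'' dichotomy by looping $(2)\Leftrightarrow(1)\Leftrightarrow(2)$ and $(3)\Rightarrow(2)\Rightarrow(3)$.

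The main obstacle, and the one place a routine calculation will not do, is this patching step: upgrading the qualitative information ``$v_n\to c>0$ in $L^2_{\mathrm{loc}}$ with vanishing Dirichlet energy'' into functions identically $1$ on a prescribed compact set without introducing uncontrolled gradients is precisely what the convex combination $\eta_n(1-\chi)+\chi$ is designed for --- the factor $(1-\eta_n)$ multiplying $\nabla\chi$ vanishes in $L^2$ on the compactly supported piece $\mathrm{supp}\,\chi$, while $|\nabla\eta_n|^2$ is already small by construction, so both pieces of the gradient estimate tend to zero.
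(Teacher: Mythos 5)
The paper itself offers no proof of this theorem --- it is quoted from Grigor'yan, Ancona and Carron --- so there is no internal argument to compare yours against. Your architecture is the standard one and is mostly sound: $(3)\Rightarrow(2)$ is correct as written, the patching $\psi_n=\eta_n(1-\chi)+\chi$ with $\nabla\psi_n=(1-\chi)\nabla\eta_n+(1-\eta_n)\nabla\chi$ does exactly what you say it does, and delegating $(1)\Leftrightarrow(2)$ to the classical null-capacity characterization of parabolicity is legitimate (that is essentially Grigor'yan's Theorem 5.1, which the paper cites).

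There is, however, one genuine gap, in $(2)\Rightarrow(3)$: the claim that ``Chebyshev gives $|A_n|\geq|U|/2$'' for $A_n=\{u_n\geq(2|U|)^{-1/2}\}\cap U$. Chebyshev bounds superlevel sets from \emph{above}; from $\int_U u_n^2=1$ you only get $\int_{A_n}u_n^2\geq 1/2$, which yields no lower bound on $|A_n|$ without control of $\|u_n\|_{L^\infty}$. If $u_n$ were a tall spike concentrating on a set of vanishing measure in $U$, then $|A_n|\to0$, $\int_U v_n^2\to0$, and the limiting constant $c$ of your truncations could be $0$, collapsing the contradiction. What excludes concentration is the hypothesis $\|\nabla u_n\|_{L^2(M)}\to0$, and it must be used \emph{before} truncating. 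The standard repair: fix a connected precompact open set $V$ with smooth boundary containing $\overline{U}$ and apply the Neumann--Poincar\'e inequality on $V$, so that $\|u_n-\overline{u}_n\|_{L^2(V)}\leq C_V\|\nabla u_n\|_{L^2(V)}\to0$, where $\overline{u}_n$ denotes the mean of $u_n$ over $V$; since $u_n\geq0$ and $\|u_n\|_{L^2(U)}=1$, the triangle inequality forces $\overline{u}_n\to|U|^{-1/2}>0$, hence $u_n\to|U|^{-1/2}$ in $L^2(V)$, and running the same argument on larger connected precompact sets gives convergence in $L^2_{\mathrm{loc}}(M)$ to this positive constant (so you do not even need Rellich). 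Truncating $v_n:=\min\left(|U|^{1/2}u_n,1\right)$ \emph{after} this step gives $v_n\to1$ in $L^2_{\mathrm{loc}}$ with $\int_M|\nabla v_n|^2\to0$, and the remainder of your argument --- the definition of $\eta_n$, the patching, and the some/any bookkeeping via the loop of implications --- goes through unchanged. With this repair the proof is correct.
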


We shall take a unified approach to proving the Hardy inequalities \eqref{eq:HardyIneq} and \eqref{eq:Hardy_sharp}. To begin with, set
\[
\Phi(s)=J_{\mu/2-1}\left(\sqrt{\lambda_\mu}s\right).
\]
The function $\Phi$ satisfies $\Phi(0)=\Phi(1)=0$ and
\begin{equation}\label{eq:Bessel_equation_Phi}
\Phi''(s)+\frac1s \Phi'(s)+\left(\lambda_\mu-\frac{(\mu/2-1)^2}{s^2}\right)\Phi(s)=0.
\end{equation}

We shall make use the following property of $\Phi$.

\begin{lemma}\label{lm:intPhi}
For any $s\in[0,1]$, we have
\begin{equation}\label{eq:intPhi}
\int_0^x \Phi'(s)^2 sds=\lambda_\mu \int_0^x \Phi(s)^2sds-\left(\frac{\mu-2}{2}\right)^2 \int_0^x \Phi(s)^2\frac{ds}{s}+\Phi'(x)\Phi(x)x.
\end{equation}
In particular,
\begin{equation}\label{eq:intPhi_1}
\int_0^1 \Phi'(s)^2 sds=\lambda_\mu \int_0^1 \Phi(s)^2sds-\left(\frac{\mu-2}{2}\right)^2 \int_0^1 \Phi(s)^2\frac{ds}{s}.
\end{equation}
\end{lemma}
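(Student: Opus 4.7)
The plan is a one-step integration by parts, using the Bessel equation \eqref{eq:Bessel_equation_Phi} to eliminate $\Phi''$. Explicitly, \eqref{eq:Bessel_equation_Phi} rearranges as
\[
\bigl(s\Phi'(s)\bigr)' \;=\; s\Phi''(s)+\Phi'(s) \;=\; -\lambda_\mu\, s\,\Phi(s) + \frac{(\mu/2-1)^2}{s}\,\Phi(s),
\]
which already contains all three terms that appear on the right-hand side of \eqref{eq:intPhi}.

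With this in hand, I would take $u=s\Phi'(s)$ and $dv=\Phi'(s)\,ds$, so that $v=\Phi(s)$, and write
\[
\int_0^x \Phi'(s)^2\, s\,ds \;=\; \bigl[s\,\Phi'(s)\Phi(s)\bigr]_0^x \;-\; \int_0^x \Phi(s)\bigl(s\,\Phi'(s)\bigr)'\,ds.
\]
Substituting the previous display into the second integral yields \eqref{eq:intPhi} directly. The specialization \eqref{eq:intPhi_1} then falls out by setting $x=1$ and using $\Phi(1)=J_{\mu/2-1}(\sqrt{\lambda_\mu})=0$.

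The only point that needs a moment of care is the boundary contribution at $s=0$, which also governs the convergence of $\int_0^x \Phi(s)^2\,ds/s$. Using the series expansion of $J_{\mu/2-1}$ recalled at the start of \S\,2.1, one has
\[
\Phi(s) \;\sim\; \frac{1}{\Gamma(\mu/2)}\left(\frac{\sqrt{\lambda_\mu}\,s}{2}\right)^{\mu/2-1} \quad\text{as } s\to 0^+,
\]
so $\Phi'(s)=O\bigl(s^{\mu/2-2}\bigr)$, hence $s\,\Phi'(s)\Phi(s)=O\bigl(s^{\mu-2}\bigr)\to 0$ and $\Phi(s)^2/s=O\bigl(s^{\mu-3}\bigr)$ is integrable near $0$. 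Both facts require $\mu>2$, which is precisely the regime in which the lemma will be applied to derive \eqref{eq:HardyIneq} and \eqref{eq:Hardy_sharp}. I do not foresee any genuine obstacle; once the ODE has been rewritten in the form above, the argument is a clean one-line integration by parts.
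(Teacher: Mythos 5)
Your proof is correct and takes essentially the same route as the paper's: both use the Bessel equation \eqref{eq:Bessel_equation_Phi} to recognize $\Phi'(s)^2 s$ as $\left(s\Phi'(s)\Phi(s)\right)'$ minus the terms on the right of \eqref{eq:intPhi}, and then integrate. Your additional check that the boundary contribution at $s=0$ vanishes and that $\int_0^x\Phi(s)^2\,ds/s$ converges (both requiring $\mu>2$) is a point the paper leaves implicit, and is a worthwhile addition.
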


\begin{proof}
By \eqref{eq:Bessel_equation_Phi}, we have
\begin{align*}
\lambda_\mu\Phi(s)^2s-\nu^2\Phi(s)^2\frac{1}{s}
=&\,\Phi''(s)\Phi(s)s+\Phi'(s)\Phi(s)\\
=&\,\left(\Phi'(s)\Phi(s)s\right)'-\Phi'(s)^2s,
\end{align*}
from which \eqref{eq:intPhi} and \eqref{eq:intPhi_1} follow immediately.
\end{proof}

\begin{proof}[Proof of Theorem \ref{th:main}/(2)(3)]

As the proof of (1), we first consider the case that $\rho>0$. Given $u\in\mathrm{Lip}_0(M)$, define
\[
\phi(x):=u(x)\Phi\left(\rho(x)/r\right),\ \ \ x\in{M}.
\]
Since $|\nabla\rho|\leq1$, we have
\[
\left|\nabla\phi\right|^2\le \Phi\left(\rho/r\right)^2\left|\nabla{u}\right|^2 + \frac{1}{r^2}u^2\Phi'\left(\rho/r\right)^2+2\frac{u}{r}\langle \nabla u,\nabla \rho\rangle \Phi'\left(\rho/r\right)\Phi\left(\rho/r\right).
\]
The variational characterization of eigenvalue gives
\begin{align}\label{eq:Hardy_main_1}
\frac{\lambda_\mu}{r^2} \int_{B_r} \phi^2dV
\leq&\,\lambda_1(B_r)\int_{B_r} \phi^2 dV \leq \int_{B_r} |\nabla\phi|^2dV\notag\\
\leq&\,\int_{B_r}|\nabla{u}|^2\Phi(\rho/r)^2 dV+\frac{1}{r^2} \int_{B_r}  u^2\Phi'(\rho/r)^2dV\notag\\
&\,+ 2\int_{B_r} \frac{u}{r}\langle \nabla u,\nabla \rho\rangle \Phi'\left(\rho/r\right)\Phi\left(\rho/r\right)dV,\ \ \ \forall\,r>r_0.
\end{align}
We then divide the \eqref{eq:Hardy_main_1} by $r$ and integrate for $r\in (r_0,+\infty)$. (Under the condition of (3) we set $r_0=\inf_M\rho>0$.) First,
\begin{align*}
&\,\int_{r_0}^{+\infty} \int_{B_r} u(x)^2\Phi\left(\rho(x)/r\right)^2 dV(x) \frac{dr}{r^3}\\
=&\, \int_M \left(\int^{+\infty}_{\max\{r_0, \rho(x)\}} \Phi\left(\rho(x)/r\right)^2\frac{dr}{r^3}\right)u(x)^2dV(x)\\
=&\,\int_M \left( \int_{0}^{\min\{1,  \rho(x)/r_0\}} \Phi\left(s\right)^2sds\right)\frac{u(x)^2}{\rho(x)^2}dV(x)\\
\geq&\,\left( \int_{0}^{1} \Phi\left(s\right)^2sds\right) \int_{M\setminus B_{r_0}}\frac{u^2}{\rho^2}dV,
\end{align*}
where we used the new variable $s=\rho(x)/r$ in the second step. Analogously, we have
\begin{align*}
&\,\int_{r_0}^{+\infty} \int_{B_r} |\nabla{u}(x)|^2\Phi(\rho(x)/r)^2 dV(x)\frac{dr}{r}\\
=&\,\int_M \left(\int^{+\infty}_{\max\{r_0, \rho(x)\}} \Phi^2\left(\rho(x)/r\right) \frac{dr}{r}\right) |\nabla{u}(x)|^2 dV(x)\\
=&\,\int_M \left(\int_{0}^{\min\{1,\rho(x)/r_0\}} \Phi^2(s)\frac{ds}{s}\right) |\nabla{u}(x)|^2 dV(x)\\
\leq&\,\left(\int_{0}^{1} \Phi^2(s)\frac{ds}{s}\right) \int_M|\nabla{u}|^2 dV
\end{align*}
and
\begin{align*}
&\,\, \int_{r_0}^{+\infty} \int_{B_r} u(x)^2 \Phi'(\rho(x)/r) ^2dV(x)\frac{dr}{r^3}\\
=\,&\,\int_M \left(\int^{+\infty}_{\max\{r_0,\rho(x)\}}\Phi'(\rho(x)/r)^2 \frac{dr}{r^3}\right) u(x)^2 dV(x)\\
=\,&\,\int_M \left(\int_{0}^{\min\{1, \rho(x)/r_0\}} \Phi'(s)^2s ds\right) \frac{u^2(x)}{\rho(x)^2} dV(x)\\
\leq\,&\,\left(\int^1_0 \Phi'(s)^2s ds\right) \int_{M\setminus B_{r_0}} \frac{u^2}{\rho^2} dV+\int_{ B_{r_0}}\left(\int_{0}^{\rho(x)/r_0} \Phi'(s)^2s ds\right) \frac{u^2}{\rho^2} dV\\
=:&\,\left(\int^1_0 \Phi'(s)^2s ds\right) \int_{M\setminus B_{r_0}} \frac{u^2}{\rho^2} dV+I_1(r_0),
\end{align*}
where
\begin{equation}\label{eq:I1}
I_1(r_0)\leq\frac{\left\|\Phi'\right\|_{L^\infty([0,1])}^2}{2r^2_0}\int_{ B_{r_0}} u^2 dV,\ \ \ \forall\,r_0\geq\inf_M\rho,
\end{equation}
so that $I_1(r_0)=0$ if $r_0=\inf_M\rho$. Eventually 
\begin{align*}
&\,\, \int_{r_0}^{+\infty} \int_{B_r} \frac{u}{r}\langle \nabla u,\nabla \rho\rangle \Phi'\left(\rho/r\right)\Phi\left(\rho/r\right)dV\frac{dr}{r}\\
=\,&\,\int_M \left(\int^{+\infty}_{\max\{r_0,\rho(x)\}}\Phi'(\rho(x)/r)\Phi\left(\rho(x)/r\right) \frac{dr}{r^2}\right) u(x)\langle \nabla u(x),\nabla \rho(x)\rangle dV(x)\\
=\,&\,\int_M \left(\int_{0}^{\min\{1,\rho(x)/r_0\}}\Phi'(s)\Phi\left(s\right) ds\right) \frac{u(x)}{\rho(x)}\langle \nabla u(x),\nabla \rho(x)\rangle dV(x)\\
=\,&\,\int_{B_{r_0}} \Phi\left(\rho/r_0\right)^2\frac{u}{\rho}\langle \nabla u,\nabla \rho\rangle dV=:I_2(r_0),
\end{align*}
where we have used the fact $2\int_{0}^{1}\Phi'(s)\Phi\left(s\right) ds=\Phi^2(1)-\Phi^2(0)=0$. We also have $I_2(r_0)=0$ if $r_0=\inf_M\rho$. Moreover, since $\Phi(0)=0$ we get that $\Phi^2(s)\le \left\|\Phi'\right\|^2_{L^\infty([0,1])} s^2$ for all $s\ge 0$, so that,
\begin{equation}\label{eq:I2}
I_2(r_0)
\leq\,\frac{\|\Phi'\|^2_{L^\infty([0,1])}}{r_0}\int_{B_{r_0}}| \nabla u||u| dV,\ \ \ \forall\,r_0\geq\inf_M\rho
\end{equation}
These together with \eqref{eq:Hardy_main_1} yield
\begin{align*}
&\,\lambda_\mu\left( \int_{0}^{1} \Phi\left(s\right)^2sds\right) \int_{M\setminus B_{r_0}}\frac{u^2}{\rho^2}dV\\
\leq&\,\left(\int_{0}^{1} \Phi^2(s)\frac{ds}{s}\right) \int_M|\nabla{u}|^2 dV+\left(\int^1_0 \Phi'(s)^2s ds\right) \int_{M\setminus B_{r_0}} \frac{u^2}{\rho^2} dV\\
&\,+I_1(r_0)+I_2(r_0).
\end{align*}
Thus
\begin{equation}\label{eq:Hardy_main_2}
\left(\frac{\mu-2}{2}\right)^2\int_{M\setminus B_{r_0}}\frac{u^2}{\rho^2}dV\leq \int_M|\nabla{u}|^2 dV+\frac{I_1(r_0)+I_2(r_0)}{A}
\end{equation}
in view of \eqref{eq:intPhi_1}, where $A=\int_{0}^{1} \Phi^2(s)\frac{ds}{s}$. If \eqref{eq:eigenvalue_quadratic_decay} holds for all $r>0$, we may take $r_0=\inf_M\rho$, so that \eqref{eq:Hardy_sharp} follows immediately from \eqref{eq:Hardy_main_2}.

Under the condition of (2), we infer from \eqref{eq:I1}, \eqref{eq:I2} and \eqref{eq:Hardy_main_2} that
\begin{equation}\label{eq:Hardy_main_3}
\left(\frac{\mu-2}{2}\right)^2\int_{M}\frac{u^2}{1+\rho^2}dV\leq (1+\delta)\int_M|\nabla{u}|^2 dV+C_\delta\int_{ B_{r_0}} u^2 dV
\end{equation}
holds for any $u\in\mathrm{Lip}_{\mathrm{loc}}(M)$ with a compact support and $\delta>0$, where 
\begin{equation}\label{eq:C_delta}
C_\delta=\left(\frac{\mu-2}{2}\right)^2+\frac{\|\Phi'\|_{L^\infty([0,1])}^2}{Ar_0^2}+\frac{\|\Phi'\|_{L^\infty([0,1])}^4}{A^2r_0^2\delta}.
\end{equation}
\
Note that $C_\delta$ only depends on $\mu$ and $r_0$ if $\delta$ is fixed.

The above proofs of inequalities \eqref{eq:Hardy_sharp} and \eqref{eq:Hardy_main_3} require an additional condition $\rho>0$. In general, if $\inf_M\rho=0$, we consider $\rho_\varepsilon:=\sqrt{\rho^2+\varepsilon^2}$. Analogously to the proof of Theorem \ref{th:main}/(1), we have \eqref{eq:eigen_epsilon}, so that \eqref{eq:Hardy_main_3} becomes
\[
\left(\frac{\mu-2}{2}\right)^2\int_{M}\frac{u^2}{1+\varepsilon^2+\rho^2}dV\leq (1+\delta)\int_M|\nabla{u}|^2 dV+C_{\delta,\varepsilon}\int_{ B_{r_{0}^\varepsilon}} u^2 dV,
\]
where $C_{\delta,\varepsilon}$ is given by \eqref{eq:C_delta} with $r_0^2$ replaced by $r_0^2+\varepsilon^2$. Thus \eqref{eq:Hardy_main_3} follows by letting $\varepsilon\rightarrow0$. Moreover, if \eqref{eq:eigenvalue_quadratic_decay} holds for all $r>0$, then $\lambda_1(B_r^\varepsilon)\geq\lambda_\mu/r^2$ when $r>\varepsilon$. Take $r_0=\varepsilon=\inf_M\rho_\varepsilon$ in \eqref{eq:Hardy_main_2} with $\rho$ replaced by $\rho_\varepsilon$, we have
\begin{equation}\label{eq:Hardy_sharp_epsilon}
\left(\frac{\mu-2}{2}\right)^2\int_M\frac{u^2}{\rho_\varepsilon^2}{dV}\leq\int_M|\nabla{u}|^2dV.
\end{equation}
We obtain \eqref{eq:Hardy_sharp} immediately by letting $\varepsilon\rightarrow0$, which completes the proof of (3).

To prove (2), we shall first derive the hyperbolicity of $M$ from \eqref{eq:Hardy_main_3} when $\mu>2$. By Theorem \ref{th:main}/(1), there exist some constants $c>0$ and $\mu>2$ such that $|B_r|\geq{cr^\mu}$ for $r\gg1$. Thus
\[
\int_{M} \frac{dV}{1+\rho^2}\geq\limsup_{r\rightarrow+\infty}\frac{|B_r|}{1+r^2}=+\infty.
\]
Choose $\overline{r}\gg{r_0}$ so that
\[
\left(\frac{\mu-2}{2}\right)^2\int_{B_{\overline{r}}}\frac{dV}{1+\rho^2}\geq C_\delta|B_{r_0}|+1+\delta.
\]
Then
\[
\int_M|\nabla{u}|^2dV\geq1
\]
whenever $u\in\mathrm{Lip}_0(M)$ and $u=1$ on $\overline{B}_{\overline{r}}$. Thus $\mathrm{cap}(\overline{B}_{\overline{r}})\geq1$ and $M$ is hyperbolic in view of Theorem \ref{th:Capacity}/(2).

Finally, it follows from Theorem \ref{th:Capacity}/(3) that
\[
\int_{B_{r_0}}u^2dV\leq{C(B_{r_0})}\int_M|\nabla{u}|^2dV
\]
for any $u\in\mathrm{Lip}_0(M)$. This together with \eqref{eq:Hardy_main_3} give \eqref{eq:HardyIneq} with
\[
C=\left(\frac{\mu-2}{2}\right)^2\frac{1}{1+\delta+C_\delta C(B_{r_0})}.
\]
If we fix some $\delta>0$, then $C$ only depends on $\mu$, $r_0$ and the geometry of $M$.
\end{proof}

\section{Proof of Proposition \ref{prop:volume_exponential}}
By definition, there exists a sequence $\{r_k\}$ with $\lim_{k\rightarrow+\infty}r_k=+\infty$, such that $\lambda_1(B_{r_k})>e^{-(\widetilde{\Lambda}_*+\varepsilon)r_k}$ for some $0<\varepsilon\ll1$. Again, for $k\geq1$ and $0<\delta<1$, we take a cut-off function $\phi_k:M\rightarrow[0,1]$ such that $\phi_k|_{B_{\delta{r_k}}}=1$,  $\phi_k|_{M\setminus{B_{r_k}}}=0$ and $|\nabla\phi_k|\leq(1-\delta)^{-1}r_k^{-1}$. Then
\begin{align*}
e^{-(\widetilde{\Lambda}_*+\varepsilon)r_k}\,|B_{\delta{r_k}}|
\leq&\,\lambda_1(B_{r_k}) \int_M \phi_k^2 dV \leq \int_M |\nabla\phi_k|^2 dV\\
\leq&\,\frac{1}{(1-\delta)^2r_k^2}|B_{r_k}\setminus B_{\delta{r_k}}|\\
\leq&\,\frac{1}{(1-\delta)^2r_k^2}|M\setminus B_{\delta{r_k}}|.
\end{align*}
That is,
\[
|M|\geq\left(1+e^{-(\widetilde{\Lambda}_*+\varepsilon)r_k}(1-\delta)^2r_k^2\right)|B_{\delta{r_k}}|,
\]
which is equivalent to
\[
|M\setminus{B_{\delta{r_k}}}|\geq\frac{e^{-(\widetilde{\Lambda}_*+\varepsilon)r_k}(1-\delta)^2r_k^2}{1+e^{-(\widetilde{\Lambda}_*+\varepsilon)r_k}(1-\delta)^2r_k^2}|M|.
\]
Thus
\[
\alpha_*\leq\lim_{k\rightarrow\infty}\frac{-\log|M\setminus B_{\delta{r_k}}|}{\delta{r_k}}\leq\frac{\widetilde{\Lambda}_*+\varepsilon}{\delta}.
\]
Letting $\delta\rightarrow1-$ and $\varepsilon\rightarrow0+$, we conclude that $\widetilde{\Lambda}_*\geq\alpha_*$.

\section{Proof of Proposition \ref{prop:eigenvalue}}\label{sec:Laplacian_rho}

Let $w\in\mathrm{Lip}_{\mathrm{loc}}(M)$ and $v\in{L^1_{\mathrm{loc}}(M)}$, by $\Delta{w}\geq{v}$ in the sense of distributions (or weakly) for some locally integrable function $v$, we mean
\begin{equation}\label{eq:distribution}
\int_M\langle \nabla w,\nabla \varphi\rangle{dV}\leq-\int_Mv\varphi{dV}
\end{equation}
for any nonnegative function $\varphi\in{C^\infty_0(M)}$. Since $C^\infty_0(M)$ is dense in $\mathrm{Lip}_0(M)$ with respect to the Sobolev norm $\|\cdot\|_{W^{1,2}}$, we see that $\Delta{w}\geq{v}$ weakly if and only if \eqref{eq:distribution} holds for all nonnegative $\varphi\in\mathrm{Lip}_0(M)$. One can define $\Delta{w}\leq{v}$ in a similar way.

We first prove the following technical lemma:

\begin{lemma}\label{lm:Laplacian_weak}
Suppose that $\Delta{w}\geq{v}$ weakly and $w>0$. Let $f:(0,+\infty)\rightarrow\mathbb(0,+\infty)$ be a smooth function. Then the following properties hold:
\begin{itemize}
\item[$(1)$]
If $f$ is increasing, then $\Delta{f(w)} \geq f'(w)v+f''(w)|\nabla{w}|^2$ weakly;

\item[$(2)$]
If $f$ is decreasing, then $\Delta{f(w)} \leq f'(w)v+f''(w)|\nabla{w}|^2$ weakly.
\end{itemize}
\end{lemma}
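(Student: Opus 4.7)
The plan is to reduce the claim to the usual chain rule for Laplacians and then exploit the sign of $f'(w)$ in order to use $f'(w)\varphi$ as an admissible (nonnegative) test function in the weak inequality $\Delta w \geq v$. More precisely, I first recall that for $w\in\mathrm{Lip}_{\mathrm{loc}}(M)$ and $f\in C^\infty((0,+\infty))$ one has the pointwise a.e.\ identity $\nabla f(w)=f'(w)\nabla w$; the Leibniz rule then gives, for any $\varphi\in\mathrm{Lip}_0(M)$,
\[
\nabla\bigl(f'(w)\varphi\bigr)=f''(w)\varphi\,\nabla w+f'(w)\nabla\varphi\qquad\text{a.e.}
\]

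Next I would rewrite the distributional pairing $\int_M\langle\nabla f(w),\nabla\varphi\rangle\,dV$ using these two identities:
\[
\int_M\langle\nabla f(w),\nabla\varphi\rangle\,dV=\int_M\langle\nabla w,\nabla\bigl(f'(w)\varphi\bigr)\rangle\,dV-\int_M f''(w)|\nabla w|^2\varphi\,dV.
\]
This is the algebraic heart of the argument; both sides are well-defined because on $\mathrm{supp}\,\varphi$ the function $w$ is bounded and bounded away from $0$ (by continuity and positivity), so $f'(w)$ and $f''(w)$ are bounded and locally Lipschitz there.

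The main (mild) obstacle is to verify that $f'(w)\varphi$ is itself an admissible nonnegative test function in the weak inequality $\Delta w\geq v$. Since $\varphi\in\mathrm{Lip}_0(M)$ with $\varphi\geq0$, and since $f'(w)$ is locally Lipschitz on $\mathrm{supp}\,\varphi$, the product $f'(w)\varphi$ belongs to $\mathrm{Lip}_0(M)$; its sign agrees with that of $f'(w)$. For case (1) with $f$ increasing we have $f'(w)\geq0$, so $f'(w)\varphi\geq0$ is admissible in $\Delta w\geq v$, yielding
\[
\int_M\langle\nabla w,\nabla\bigl(f'(w)\varphi\bigr)\rangle\,dV\leq -\int_M v\,f'(w)\varphi\,dV,
\]
and combining with the identity above proves $\Delta f(w)\geq f'(w)v+f''(w)|\nabla w|^2$ weakly.

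For case (2) with $f$ decreasing the admissible test function is instead $-f'(w)\varphi\geq0$; feeding this into $\Delta w\geq v$ reverses the inequality sign on the first term of the identity, and the same computation produces the reverse inequality $\Delta f(w)\leq f'(w)v+f''(w)|\nabla w|^2$. Finally, the density of $C_0^\infty(M)$ in $\mathrm{Lip}_0(M)$ with respect to $\|\cdot\|_{W^{1,2}}$ (noted just before the lemma) ensures that testing against $\mathrm{Lip}_0$ functions is equivalent to testing against smooth compactly supported ones, so the conclusion matches the stated notion of distributional inequality.
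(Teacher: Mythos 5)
Your proposal is correct and follows essentially the same route as the paper: write $\nabla f(w)=f'(w)\nabla w$, move $f'(w)$ onto the test function via the Leibniz rule to obtain $\int_M\langle\nabla w,\nabla(f'(w)\varphi)\rangle\,dV-\int_M f''(w)|\nabla w|^2\varphi\,dV$, and use the sign of $f'$ to make $\pm f'(w)\varphi$ an admissible nonnegative test function in $\Delta w\geq v$. The only difference is that you spell out the admissibility and boundedness checks that the paper leaves implicit.
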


\begin{proof}
(1) Let $\varphi\in\mathrm{Lip}_0(M)$ and $\varphi\geq0$. If $f$ is increasing, then we also have $f'(w)\varphi\in\mathrm{Lip}_0(M)$ and $f'(w)\varphi\geq0$. Thus
\begin{align*}
\int_M\left\langle{\nabla{f(w)},\nabla\varphi}\right\rangle{dV}
=&\,\int_M\left\langle{f'(w)\nabla{w},\nabla\varphi}\right\rangle{dV}\\
=&\,\int_M\left\langle{\nabla{w},\nabla\left(f'(w)\varphi\right)}\right\rangle{dV}-\int_Mf''(w)|\nabla{w}|^2\varphi{dV}\\
\leq&\,-\int_M\left(f'(w)v+f''(w)|\nabla{w}|^2\right)\varphi{dV},
\end{align*}
which proves the first assertion.

(2) Analogously, we have
\begin{align*}
\int_M\left\langle{\nabla{f(w)},\nabla\varphi}\right\rangle{dV}
=&\,\int_M\left\langle{f'(w)\nabla{w},\nabla\varphi}\right\rangle{dV}\\
=&\,-\int_M\left\langle{\nabla{w},\nabla\left(-f'(w)\varphi\right)}\right\rangle{dV}-\int_Mf''(w)|\nabla{w}|^2\varphi{dV}\\
\geq&\,-\int_M\left(f'(w)v+f''(w)|\nabla{w}|^2\right)\varphi{dV}.\qedhere
\end{align*}
\end{proof}

\begin{proof}[Proof of Proposition \ref{prop:eigenvalue}]
We follow the ideas in \cite{Carron97}. Assume for a moment that $\rho>0$. Apply Lemma \ref{lm:Laplacian_weak}/(1) with $w=\rho^2$ and $f(t)=t^{1/2}$, we get
\begin{equation}\label{eq:Delta_rho}
\Delta\rho\geq\frac{\mu-|\nabla\rho|^2}{\rho}
\end{equation}
weakly.

Let $\phi$ be a Lipschitz continuous function which is positive on $B_r$ and fix $u\in\mathrm{Lip}_0(B_r)$. With $v:=u/\phi$, we have
\begin{align}\label{eq:ineq_u_vphi}
\int_M|\nabla{u}|^2dV
=&\,\int_M\phi^2|\nabla{v}|^2{dV}+\int_M{v}^2|\nabla\phi|^2{dV}+2\int_M\phi{v}\langle{\nabla\phi,\nabla{v}}\rangle{dV}\notag\\
=&\,\int_M\phi^2|\nabla{v}|^2dV+\int_M\left\langle{\nabla(\phi{v}^2),\nabla\phi}\right\rangle{dV}.
\end{align}

Let us choose $\phi=\psi_\mu(\rho/r)$, where
\[
\psi_\mu(s)=s^{1-\mu/2}J_{\mu/2-1}(\sqrt{\lambda_\mu}s).
\]
Recall that $\psi_\mu$ is a solution of the ODE \eqref{eq:ODE_psi_mu} and $\psi_\mu'\leq0$. Thus \eqref{eq:Delta_rho},  \eqref{eq:ODE_psi_mu} together with Lemma \ref{lm:Laplacian_weak}/(2) yield
\begin{align}\label{eq:Delta_psi_mu}
\Delta\phi
\leq&\,\psi_\mu'(\rho/r)\frac{\mu-|\nabla\rho|^2}{\rho{r}}+\psi_\mu''(\rho/r)\frac{|\nabla\rho|^2}{r^2}\notag\\
=&\,\psi_\mu'(\rho/r)\frac{\mu-1}{\rho{r}}+\psi_\mu''(\rho/r)\frac{1}{r^2}+\psi_\mu'(\rho/r)\frac{1-|\nabla\rho|^2}{\rho{r}}-\psi_\mu''(\rho/r)\frac{1-|\nabla\rho|^2}{r^2}\notag\\
=&\,-\frac{\lambda_\mu}{r^2}\phi+\frac{1-|\nabla\rho|^2}{r^2}\left(\frac{\psi_\mu'(\rho/r)}{\rho/r}-\psi_\mu''(\rho/r)\right).
\end{align}
We have (cf. Lebedev \cite[formula (5.3.5)]{Lebedev})
\[
\psi_\mu'(s)=-s\psi_{\mu+2}(s),
\]
from which it follows that
\[
\psi_\mu'(s)/s-\psi_\mu''(s)=-\psi_{\mu+2}(s)-\left(-s\psi_{\mu+2}(s)\right)'=-s^2\psi_{\mu+4}(s)\leq0.
\]
Then \eqref{eq:Delta_psi_mu} implies
\[
\Delta\phi\leq-\frac{\lambda_\mu}{r^2}\phi,
\]
which gives
\[
\int_M\left\langle{\nabla(\phi{v}^2),\nabla\phi}\right\rangle{dV}\geq-\int_M\phi{v}^2\left(-\frac{\lambda_\mu}{r^2}\phi\right)dV=\frac{\lambda_\mu}{r^2}\int_M{u}^2{dV}.
\]
This together with \eqref{eq:ineq_u_vphi} yield
\begin{equation}\label{eq:Hardy_Poincare}
\frac{\lambda_\mu}{r^2}\int_{B_r}u^2dV \leq \int_{B_r}|\nabla{u}|^2dV,\ \ \ \forall\,u\in\mathrm{Lip}_{\mathrm{loc}}(B_r)
\end{equation}
under the additional condition $\rho>0$.

In general, we use $\rho_\varepsilon:=(\rho^2+\varepsilon^2)^{1/2}$ instead of $\rho$. Note that $|\nabla\rho_\varepsilon|\leq1$ and
\[
\Delta\rho_\varepsilon^2=\Delta\rho^2\geq2\mu.
\]
Thus \eqref{eq:Hardy_Poincare} holds if $B_r$ is replaced by $B_r^\varepsilon$ in view of \eqref{eq:eigen_epsilon}. The proposition follows by letting $\varepsilon\rightarrow0$.
\end{proof}

\section{Examples}\label{sec:eg}

Let $M=\mathbb{R}\times\mathbb{S}^1$ be equipped with the following Riemannian metric
\[
g=dt^2+\eta'(t)^2d\theta^2,\ \ \ t\in\mathbb{R},\ e^{i\theta}\in\mathbb{S}^1,
\]
where $\eta:\mathbb{R}\rightarrow\mathbb{R}$ is a smooth function such that $\eta'(t)>0$ and $\lim_{t\rightarrow-\infty}\eta(t)=0$. Dodziuk-Pigmataro-Randol-Sullivan \cite[Proposition 3.1]{DPRS} showed that if $\eta(t)=e^t$, then $\lambda_1(M)\geq1/4$, so that $(M,g)$ is hyperbolic.

In general, we consider
\[
h(t):=\int^t_0\frac{ds}{\eta'(s)},\ \ \ t\in\mathbb{R},
\]
which gives a function on $M$. A straightforward calculation shows
\[
\Delta{h}=\frac{\partial^2h}{\partial{t}^2}+\frac{\eta''(t)}{\eta'(t)}\frac{\partial{h}}{\partial{t}}+\frac{1}{\eta'(t)^2}\frac{\partial^2h}{\partial\theta^2}=0,
\]
i.e., $h$ is a harmonic function on $M$. Moreover, in the new coordinate system $(\widetilde{t},\theta):=(h(t),\theta)$, we may write
\[
g=\eta'(t)^2\left(h'(t)^2dt^2+d\theta^2\right) =\eta'(t)^2\left(d\widetilde{t}\,^2+d\theta^2\right),
\]
which indicates that $(M,g)$ is conformally equivalent to the cylinder $(\inf{h},\sup{h})\times\mathbb{S}^1$ equipped with a flat metric. In conclusion, we have

\begin{proposition}\label{prop:example_hyperbolic}
$(M,g)$ is hyperbolic if and only if $\inf{h}>-\infty$ or $\sup{h}<+\infty$.
\end{proposition}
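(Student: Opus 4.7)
The plan is to exploit the explicit conformal identification $(M,g) \cong \bigl((\inf h, \sup h) \times \mathbb{S}^1, g_0\bigr)$ with $g_0 = d\tilde t^2 + d\theta^2$ coming from $\tilde t = h(t)$, which is exhibited in the paragraph just above the statement. Since in dimension two subharmonicity---and therefore the type of the manifold, as already recalled in the introduction---depends only on the conformal class of the metric, I will freely pass between $(M,g)$ and its flat cylinder model, using whichever is more convenient.

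For the ``if'' direction, assume first that $\inf h > -\infty$ (the case $\sup h < +\infty$ is symmetric). Since $h$ is harmonic on $(M,g)$, a one-line computation gives $\Delta(e^{-h}) = e^{-h}|\nabla h|^2 \geq 0$, so $u := e^{-h}$ is a bounded, positive, non-constant subharmonic function on $M$ (non-constant because $h'(t) = 1/\eta'(t) > 0$). Subtracting a constant strictly larger than $\sup u \leq e^{-\inf h}$ produces a negative non-constant subharmonic function, contradicting parabolicity; hence $(M,g)$ is hyperbolic. In the symmetric case one uses $u := e^{h}$ instead, noting $\Delta(e^{h}) = e^{h}|\nabla h|^2 \geq 0$.

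For the ``only if'' direction, I argue the contrapositive. If $\inf h = -\infty$ and $\sup h = +\infty$, the conformal model is the complete flat cylinder $\mathbb{R} \times \mathbb{S}^1$, whose geodesic balls satisfy $|B(x_0, r)| = O(r)$ as $r \to +\infty$, since for large $r$ the ball is essentially an interval of length $2r$ times $\mathbb{S}^1$. Consequently $\int^{+\infty} r \, dr / |B(x_0, r)| = +\infty$, and the Karp--Varopoulos--Grigor'yan criterion \eqref{eq:parabolic_volume_1} forces this cylinder to be parabolic. By conformal invariance of the type, $(M,g)$ is then parabolic as well.

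The only genuinely non-routine step is the appeal to conformal invariance of the parabolic/hyperbolic dichotomy, which is however classical in dimension two and is already invoked in the introduction of the paper. Everything else reduces to a short subharmonicity check for $e^{\pm h}$ together with the volume criterion applied to a manifestly linearly-growing model. Should a fully self-contained argument be preferred, one can replace the conformal reduction in the parabolicity direction by exhausting $M$ with the cylinders $\{h^{-1}(-n) < t < h^{-1}(n)\} \times \mathbb{S}^1$ and directly checking that their capacities (computed with the harmonic function $h$) tend to zero as $n \to +\infty$.
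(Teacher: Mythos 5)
Your proof is correct. The paper itself does not write out an argument for this proposition: it records the conformal identification of $(M,g)$ with the flat cylinder $(\inf h,\sup h)\times\mathbb{S}^1$ and then concludes by the classical classification of cylinders (a finite or half-infinite flat cylinder is conformally an annulus or a punctured disc, hence hyperbolic; the doubly infinite one is conformally $\mathbb{C}\setminus\{0\}$, hence parabolic). Your parabolic direction is the same in substance: conformal invariance of the type in dimension two, plus the linear volume growth of the complete flat cylinder fed into the criterion \eqref{eq:parabolic_volume_1}. Where you genuinely differ is the hyperbolic direction: rather than citing the cylinder classification, you exhibit the explicit bounded, positive, non-constant subharmonic function $e^{-h}$ (resp.\ $e^{h}$), whose subharmonicity is immediate from $\Delta h=0$ and $\Delta f(h)=f'(h)\Delta h+f''(h)|\nabla h|^2$; subtracting a constant larger than $e^{-\inf h}$ (resp.\ $e^{\sup h}$) then produces a negative non-constant subharmonic function, which contradicts parabolicity under the paper's own definition. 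This has the advantage of being self-contained and of working directly on $(M,g)$ without invoking conformal invariance for that half of the statement; the paper's route is shorter but leans on an unstated classical fact. Both are valid, and your optional capacity-based alternative for the parabolic direction would also go through.
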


Let $\rho(t,\theta)=|t|$. Clearly, $\rho$ is an exhaustion function which satisfies $|\nabla\rho|_g\leq1$. Indeed, $\rho$ is the geodesic distance to the circle $\{0\}\times\mathbb{S}^1$. The goal of this section is to investigate the asymptotic behaviour of $\lambda_1(B_r)$ as $r\rightarrow+\infty$ for different choices of $\eta$. We start with the following elementary lower estimate.

\begin{proposition}\label{prop:example}
\[
\lambda_1(B_r)\geq\frac{1}{4}\inf_{|t|\leq{r}}\frac{\eta'(t)^2}{\eta(t)^2}.
\]
\end{proposition}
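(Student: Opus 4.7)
The plan is to reduce the problem to a one-dimensional weighted Hardy-type inequality by slicing in the $\theta$ direction, and then to use one integration by parts together with Cauchy--Schwarz to obtain the sharp factor of $4$.

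First I would record the geometric data: on $(M,g)$ one has $dV=\eta'(t)\,dt\,d\theta$ and $|\nabla u|^2 = u_t^2 + \eta'(t)^{-2}u_\theta^2$. Since $B_r=\{|t|<r\}\times\mathbb{S}^1$, any $u\in\mathrm{Lip}_0(B_r)$ has support in $\overline{B_r}$ and hence vanishes on the two circles $\{t=\pm r\}$. By Fubini, for a.e.\ $\theta$ the slice $\phi(t):=u(t,\theta)$ is a Lipschitz function on $[-r,r]$ with $\phi(\pm r)=0$. Dropping the non-negative angular term $\eta'^{-2}u_\theta^2$ and integrating over $\theta$, the proposition reduces to the one-dimensional estimate
\[
\int_{-r}^r \phi(t)^2 \eta'(t)\,dt \;\leq\; \frac{4}{\inf_{|t|\leq r} \eta'(t)^2/\eta(t)^2}\int_{-r}^r \phi'(t)^2\,\eta'(t)\,dt
\]
for every such $\phi$.

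To establish this 1D inequality I would exploit that $\eta'=(\eta)'$, with $\eta>0$ everywhere on $\mathbb{R}$ (guaranteed by $\eta'>0$ and $\lim_{t\to-\infty}\eta=0$). Integration by parts kills the boundary term, giving
\[
\int_{-r}^r \phi^2 \eta'\,dt=\bigl[\phi^2\eta\bigr]_{-r}^r-2\int_{-r}^r \phi\phi'\,\eta\,dt=-2\int_{-r}^r \phi\phi'\,\eta\,dt.
\]
Rewriting the integrand on the right as $\bigl(\phi\sqrt{\eta^2/\eta'}\bigr)\cdot\bigl(\phi'\sqrt{\eta'}\bigr)$ and applying Cauchy--Schwarz, then using $\eta^2/\eta'=(\eta/\eta')^2\eta'$ together with $\sup_{|t|\leq r}(\eta/\eta')^2=1/\inf_{|t|\leq r}(\eta'/\eta)^2$, I obtain
\[
\int_{-r}^r \phi^2\eta'\,dt \;\leq\; \frac{2}{\inf_{|t|\leq r}(\eta'/\eta)}\,\biggl(\int_{-r}^r \phi^2\eta'\,dt\biggr)^{\!1/2}\biggl(\int_{-r}^r (\phi')^2 \eta'\,dt\biggr)^{\!1/2}.
\]
Dividing by the square root of the left-hand side and squaring yields the desired bound.

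No part of the argument is expected to be seriously obstructive. The infimum $\inf_{|t|\leq r}\eta'^2/\eta^2$ is finite and strictly positive by continuity of $\eta,\eta'$ on the compact interval $[-r,r]$ together with $\eta'>0$ and $\eta>0$. The only step requiring attention is the vanishing of the boundary term $[\phi^2\eta]_{-r}^r$ in the integration by parts, which is ensured by $\phi(\pm r)=0$, itself a consequence of $\mathrm{supp}\,u\subset\overline{B_r}$ and the continuity of $u$.
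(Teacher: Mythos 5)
Your proposal is correct and follows essentially the same route as the paper: slice in $\theta$, integrate by parts in $t$ using $\eta'=(\eta)'$ with vanishing boundary terms, and apply Cauchy--Schwarz to extract the factor $4\sup_{|t|\le r}(\eta/\eta')^2$. The only cosmetic difference is that you place the weight $\eta^2/\eta'$ on the $\phi$ factor in Cauchy--Schwarz while the paper places it on the $\partial\phi/\partial t$ factor; both yield the identical bound.
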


\begin{proof}
The idea is essentially implicit in \cite{DPRS}. Since $dV=\eta'(t)dtd\theta$, we have
\[
\int^r_{-r}\phi^2\eta'(t)dt=-2\int^r_{-r}\phi\frac{\partial\phi}{\partial{t}}\eta(t)dt,\ \ \ \forall\,\phi\in{C^\infty_0(B_r)},
\]
so that
\begin{align}\label{eq:Cauchy_Schwarz_example}
\int^r_{-r}\phi^2\eta'(t)dt
\leq&\,4\int^r_{-r}\left|\frac{\partial\phi}{\partial{t}}\right|^2\frac{\eta(t)^2}{\eta'(t)}dt\leq4\int^r_{-r}\left|\nabla\phi\right|^2\frac{\eta(t)^2}{\eta'(t)}dt\notag\\
\leq&\,4\sup_{|t|\leq{r}}\frac{\eta(t)^2}{\eta'(t)^2}\int^r_{-r}|\nabla\phi|^2\eta'(t)dt
\end{align}
in view of the Cauchy-Schwarz inequality. Thus
\[
\int_{B_r}\phi^2dV
= \int^{2\pi}_0\int^r_{-r}\phi^2\eta'(t)dtd\theta \leq 4\sup_{|t|\leq{r}}\frac{\eta(t)^2}{\eta'(t)^2}\int_{B_r}|\nabla\phi|^2dV,
\]
from which the assertion immediately follows.
\end{proof}

\begin{example}
Given $\alpha>0$, take $\eta$ such that
\[
\eta(t)=\begin{cases}
(-t)^{-\alpha},\ \ \ &t<-1,\\
2t^\alpha,\ \ \  &t>1.
\end{cases}
\]
Then
\begin{itemize}
\item[$(1)$]
$\Lambda_*\sim\alpha^2/4$ as $\alpha\rightarrow+\infty$.

\item[$(2)$]
$M$ is parabolic if and only if\/ $0<\alpha\leq2$.
\end{itemize}
\end{example}

\begin{proof}
(1) By Proposition \ref{prop:example}, we have
\[
\Lambda_*=\liminf_{r\rightarrow +\infty} \{ r^2 \lambda_1(B_r)\}\geq\frac{\alpha^2}{4}.
\]
On the other hand, since
\[
|B_r|=2\pi\int^r_{-r}\eta'(t)dt=2\pi(\eta(r)-\eta(-r))=4\pi{r}^\alpha-2\pi{r}^{-\alpha},\ \ \ \forall\,r\gg1,
\]
we see that
\[
\nu_*=\liminf_{r\rightarrow +\infty} \frac{\log |B_r|}{\log r}=\alpha.
\]
This together with Corollary \ref{cor:Upper} gives
\[
\frac{\alpha^2}{4}\leq\Lambda_*\leq\lambda_{\alpha}=j^2_{\alpha/2-1}\sim\frac{\alpha^2}{4}
\]
i.e., $\Lambda_*\sim\alpha^2/4$ as $\alpha\rightarrow+\infty$. In particular, $M$ is hyperbolic provided $\alpha\gg1$, in view of Corollary \ref{cor:hyperbolic}. 

(2) follows immediately from Proposition \ref{prop:example_hyperbolic}.
\end{proof}

\begin{example}\label{eg:exponential}
Given $\alpha>0$, take $\eta$ such that
\begin{equation}\label{eq:eta_exp}
\eta'(t)=e^{-\alpha|t|},\ \ \ |t|>1.
\end{equation}
Then
\begin{equation}\label{eq:exp_example_2}
\lambda_1(B_r)\gtrsim{e^{-\alpha{r}}},
\end{equation}
and
\[
\liminf_{r\rightarrow+\infty}\frac{-\log\lambda_1(B_r)}{r}=\alpha=\liminf_{r\rightarrow+\infty}\frac{-\log|M\setminus B_r|}{r},
\]
i.e., the estimate in Proposition \ref{prop:volume_exponential} is sharp.
\end{example}

\begin{proof}
First of all, since
\[
|M\setminus B_r|=4\pi\int^\infty_re^{-\alpha{t}}dt\asymp{e^{-\alpha{r}}},\ \ \ r\gg1,
\]
we have $\liminf_{r\rightarrow+\infty}\frac{-\log|M\setminus B_r|}{r}=\alpha$, which implies
\[
\liminf_{r\rightarrow+\infty}\frac{-\log\lambda_1(B_r)}{r}\geq\alpha,
\]
in view of Proposition \ref{prop:volume_exponential}.

Next, we shall use the following Hardy-type inequality (cf.  Opic-Kufner \cite{OpicKufner}, pp. 100--103)
\begin{equation}\label{eq:Hardy_exp}
\int^r_{-r}\phi(t)^2\eta'(t)dt\lesssim{e^{\alpha{r}}}\int^r_{-r}\phi'(t)^2\eta'(t)dt,\ \ \ \forall\,\phi\in{C^\infty_0((-r,r))},
\end{equation}
where the implicit constant is independent of $r$. For reader's convenience, we include here a rather short proof for this special case. Since $\int^{+\infty}_{-\infty}\eta'(t)dt$ is finite in view of \eqref{eq:eta_exp}, we have
\begin{equation}\label{eq:Hardy_exp_LHS}
\int^r_{-r}\phi(t)^2\eta'(t)dt\leq\sup_{-r<{t}<{r}}\phi(t)^2\int^r_{-r}\eta'(t)dt\lesssim\sup_{-r<{t}<{r}}\phi(t)^2.
\end{equation}
On the other hand, by setting $|\phi(t_0)|=\sup_{-r<{t}<{r}}|\phi(t)|$, we have
\[
\int^r_{-r}|\phi'(t)|dt\geq\int^{t_0}_{-r}|\phi'(t)|dt\geq\left|\int^{t_0}_{-r}\phi'(t)dt\right|=|\phi(t_0)|=\sup_{-r<{t}<{r}}|\phi(t)|.
\]
This together with the Cauchy-Schwarz inequality yield
\begin{equation}\label{eq:Hardy_exp_RHS}
\sup_{-r<{t}<{r}}\phi(t)^2\leq\left(\int^r_{-r}\phi'(t)^2\eta'(t)dt\right)\left(\int^r_{-r}\frac{1}{\eta'(t)}dt\right)\lesssim{e^{\alpha{r}}}\int^r_{-r}\phi'(t)^2\eta'(t)dt.
\end{equation}
By \eqref{eq:Hardy_exp_LHS} and \eqref{eq:Hardy_exp_RHS}, we obtain \eqref{eq:Hardy_exp}, which in turn gives \eqref{eq:exp_example_2}, i.e.,
\[
\liminf_{r\rightarrow+\infty}\frac{-\log(\lambda_1(B_r))}{r}\leq\alpha.\qedhere
\]
\end{proof}

\begin{remark}
By Proposition \ref{prop:example}, we only obtain a weaker conclusion
\[
\lambda_1(B_r)\geq\frac{1}{4}\frac{\eta'(r)^2}{\eta(r)^2}\gtrsim{e^{-2\alpha{r}}}.
\]
\end{remark}

\begin{example}\label{eg:counter_example}
Take $\eta$ such that
\[
\eta'(t)=\begin{cases}
n^2,\ \ \ &\,2^n<t<2^n+1,\\
e^t,\ \ \ &\,2^n+2<t<2^{n+1}-1\\
\end{cases}
\]
and
\[
\eta'(t)\geq{n^2},\ \ \ 2^n-1\leq{t}\leq{2^n+2}
\]
for $n=2,3,\cdots$. Then
\[
\lambda_1(B_r)\lesssim{e^{-r/2}(\log{r})^2},\ \ \ \forall\,r\gg1
\]
and $M$ is hyperbolic.
\end{example}

\begin{proof}
Define $h$ as Proposition \ref{prop:example_hyperbolic}. Since $\sup{h}=\int^{+\infty}_0\frac{dt}{\eta'(t)}<+\infty$, we infer from Proposition \ref{prop:example_hyperbolic} that $M$ is hyperbolic. Use the test function,
\[
\phi(t):=\begin{cases}
t,\ \ \ &\,0\leq{t}\leq{1},\\
1,\ \ \ &\,1\leq{t}\leq{r-1},\\
r-t,\ \ \ &\,r-1\leq{t}\leq{r},\\
0,\ \ \ &\,\text{otherwise},
\end{cases}
\]
we obtain
\begin{equation}\label{eq:counter_example_upper}
\lambda_1(B_r)\leq\frac{\int^r_{-r}\phi'(t)^2\eta'(t)dt}{\int^r_{-r}\phi(t)^2\eta'(t)dt}\leq\frac{\eta(r)-\eta(r-1)+\eta(1)-\eta(0)}{\eta(r-1)-\eta(1)}.
\end{equation}
For $r=2^n+1$, where $n\in\mathbb{Z}^+$ and $n\geq2$, we have $\eta(r)-\eta(r-1)=n^2$ and
\[
\eta(r-1)=\eta(2^n)\geq\int^{2^n-1}_{2^{n-1}+2}e^tdt\asymp{e^{2^n}},
\]
so that \eqref{eq:counter_example_upper} gives
\[
\lambda_1(B_{2^n+1})\lesssim{e^{-2^n}n^2}.
\]
In general, for $r\gg1$, take $n\in\mathbb{Z}^+$ such that $2^n+1\leq{r}<2^{n+1}+1$. Then
\[
\lambda_1(B_r)\leq\lambda_1(B_{2^n+1})\lesssim{e^{-2^n}n^2}\lesssim{e^{-r/2}(\log{r})^2}.\qedhere
\]
\end{proof}

\begin{remark}
We have
\[
\Lambda^*:=\limsup_{r\rightarrow+\infty}r^2\lambda_1(B_r)=0
\]
and
\[
\widetilde{\Lambda}_*=\liminf_{r\rightarrow+\infty}\frac{-\log\lambda_1(B_r)}{r}\geq\frac{1}{2}>0.
\]
On the other hand, $M$ is hyperbolic with an infinite volume.
\end{remark}

\begin{example}

Let $\mu$ be a positive, smooth and decreasing function on $[1,+\infty)$ satisfying
\begin{itemize}
\item[$(1)$]
$\lim_{t\rightarrow+\infty}\mu(t)=0$,

\item[$(2)$]
$\int^{+\infty}_1\mu(s)ds=+\infty$,

\item[$(3)$]
$t\mu(t)$ is increasing on $[c,+\infty)$ for some $c\gg1$.
\end{itemize}
Take $\eta$ such that
\[
\eta(t)=\begin{cases}
e^{-\int^{-t}_1\mu(s)ds},\ \ \ &t<-1,\\
2e^{\int^t_1\mu(s)ds},\ \ \ &t>1.
\end{cases}
\]
Then
\begin{equation}\label{eq:lambda_decay_mu}
\lambda_1(B_r)\asymp\mu(r)^2.
\end{equation}
\end{example}

\begin{proof}
Note that $\eta'(t)/\eta(t)=\mu(-t)$ for $t<-1$ and $\eta'(t)/\eta(t)=\mu(t)$ for $t>1$. Thus it follows from Proposition \ref{prop:example} that
\begin{equation}\label{eq:mu_behaviour_lower}
\lambda_1(B_r)\geq\frac{1}{4}\inf_{|t|\leq{r}}\frac{\eta'(t)^2}{\eta(t)^2}=\frac{\mu(r)^2}{4}.
\end{equation}

On the other hand, we have $r\mu(r)\geq{c\mu(c)}>0$ for $r\geq{c}\gg1$ in view of the condition (3). Thus we may take $0<\varepsilon\leq{c\mu(c)}/2$ so that
\begin{equation}\label{eq:r_epsilon}
r_\varepsilon:=r-\varepsilon\mu(r)^{-1}=r\left(1-\varepsilon{r}^{-1}\mu(r)^{-1}\right)\geq\frac{r}{2},\ \ \ \forall\,r\geq{c}.
\end{equation}
Set $I_r:=(-r,-r_\varepsilon)$. Since $\eta''(t)=-\mu'(-t)\eta(t)+\mu(-t)\eta'(t)\geq0$, i.e., $\eta'(t)$ is increasing on $(-\infty,-1]$, it follows that
\begin{align*}
\lambda_1(B_r)
\leq&\, \lambda_1\left(\{(t,\theta)\in{M}:-r\leq{t}\leq{-r_\varepsilon}\}\right)\\
\leq&\, \inf_{\phi\in{C^\infty_0(I_r)}}\left\{\frac{\int_{I_r}\phi'(t)^2\eta'(t)dt}{\int_{I_r}\phi(t)^2\eta'(t)dt}\right\}\\
\leq&\, \inf_{\phi\in{C^\infty_0(I_r)}}\left\{\frac{\int_{I_r}\phi'(t)^2dt}{\int_{I_r}\phi(t)^2dt}\right\}\cdot\frac{\eta'(-r_\varepsilon)}{\eta'(-r)}\\
=&\, \lambda_1(I_r)\cdot\frac{\eta'(-r_\varepsilon)}{\eta'(-r)}.
\end{align*}
Since $\lambda_1(I_r)\lesssim|I_r|^{-2}\asymp\mu(r)^2$, we obtain
\begin{equation}\label{eq:mu_behaviour_upper}
\lambda_1(B_r)\lesssim\mu(r)^2\cdot\frac{\eta'(-r_\varepsilon)}{\eta'(-r)}.
\end{equation}
We have
\[
\frac{\eta'(-r_\varepsilon)}{\eta'(-r)}=\frac{\mu(r_\varepsilon)}{\mu(r)}\exp\left(\int^r_{r_\varepsilon}\mu(s)ds\right)\leq\frac{\mu(r_\varepsilon)}{\mu(r)}\exp\left(\varepsilon\frac{\mu(r_\varepsilon)}{\mu(r)}\right),
\]
for $\mu$ is decreasing and $r-r_\varepsilon=\varepsilon\mu(r)^{-1}$. By condition (3) and \eqref{eq:r_epsilon}, we have
\[
\frac{\mu(r_\varepsilon)}{\mu(r)}\leq\frac{r}{r_\varepsilon}\leq2.
\]
Thus $\frac{\eta'(-r_\varepsilon)}{\eta'(-r)}=O(1)$ as $r\rightarrow+\infty$. This together with \eqref{eq:mu_behaviour_lower} and \eqref{eq:mu_behaviour_upper} give \eqref{eq:lambda_decay_mu}.
\end{proof}

Particular choices of $\mu$ give the following
\begin{itemize}
\item[$(1)$]
For $\mu(t)=t^{-1}(\log{t})^\beta$ with $\beta>0$, $\lambda_1(B_r)\asymp{r^{-2}}(\log{r})^{2\beta}$.

\item[$(2)$]
For $\mu(t)=t^{-\alpha}$ with $0<\alpha<1$, $\lambda_1(B_r)\asymp{r^{-2\alpha}}$.

\item[$(3)$] For $\mu(t)=(\log(t+1))^{-\gamma}$ with $\gamma>0$, $\lambda_1(B_r)\asymp(\log{r})^{-2\gamma}$.
\end{itemize}
\noindent In all three cases, we have
\[
\Lambda_*=\liminf_{r\rightarrow+\infty}\left\{r^2\lambda_1(B_r)\right\}=+\infty.
\]
Thus these Riemannian manifolds $(M,g)$ are hyperbolic in view of Corollary \ref{cor:hyperbolic}.

\end{document}